\newtheorem{theorem}{Theorem}[section]
\newtheorem{proposition}[theorem]{Proposition}
\newtheorem{corollary}[theorem]{Corollary}
\newtheorem{lemma}[theorem]{Lemma}
\theoremstyle{definition}
\newtheorem{remark}[theorem]{Remark}
\newtheorem{definition}[theorem]{Definition}
\newtheorem{assumption}[theorem]{Assumption}
\def\R{{\mathbb R}}
\def\N{{\mathbb N}}
\def\Rp{\R_{+}}
\def\AK{{\cal A}(K)}
\def\AKn{{\cal A}(K_n)}
\def\Lil{L^{\infty}_{loc}(\Rp)}
\def\Linl{L^{\infty}(\Rp)}
\def\Ml{{\cal M}_{loc}(\Rp)}
\def\MII{{\cal M}(I)}
\def\BVl{BV_{loc}(\Rp)}
\def\kdp{(k')^{+}}
\def\kdm{(k')^{-}}
\def\ws{\stackrel{\ast}{\rightharpoonup}}
\def\S{{\mathcal S}}
\def\T{{\mathcal T}}
\def\M{{\mathcal O}}
\def\fe{f^{(h)}}
\def\Ue{U^{(h)}}
\def\Ued{U^{(h)\prime}}
\def\ke{k^{(h)}}
\def\ked{k^{(h)\prime}}
\def\ce{c^{(h)}}
\newcommand{\deb}{\rightharpoonup}
\newcommand{\ve}{\varepsilon}
\newcommand*{\avint}{\mathop{\ooalign{$\int$\cr$-$}}}
\begin{document}

\title{Spatial rational expectations equilibria\\ in the Ramsey model of optimal growth}

\author{F. Santambrogio\thanks{Laboratoire de Math\'ematiques d'Orsay, 
Univ. Paris-Sud, CNRS, Universit\'e Paris-Saclay, 
91405 Orsay Cedex, FRANCE, 
{\tt  filippo.santambrogio@math.u-psud.fr}}, A. Xepapadeas\thanks{Department of International and European Economic Studies, Athens University of Economics and Business, 76 Patission Str., 10434 Athens, GREECE, and Department of Economics, University of Bologna, P.zza Scaravilli 2, 40126 Bologna, ITALY, {\tt xepapad@aueb.gr}} ,  A. N. Yannacopoulos\thanks{Department of Statistics, Athens University of Economics and Business, 95 Patission Str., 10434 Athens, GREECE, {\tt ayannaco@aueb.gr }}}

\date{}
\maketitle

\begin{abstract}
It is the aim of this work  provide a rigorous treatment concerning the formation of spatial rational expectations equlibria in a general class of spatial economic models under the effect of externalities, using techniques from the calculus of variations.  Using detailed estimates for a parametric optimisation problem, the existence of spatial rational expectations equilibria  is proved and they are characterised in terms of a nonlocal Euler-Lagrange equation.
\end{abstract}

\section{Introduction}

We consider in this work infinite horizon spatial optimal growth models, where
capital accumulation ocurres localy but output production is affected by
general spatial interactions. In these models a natural research question is the existence of a
rational expectations equlibrium (REE) where a reperesentative individual in
each location acting as a `` local planner''
 maximises discounted utility of consumption by considering the spatial
externality as exogenous.

In this paper we assume that spatial interactions among locations can be
expressed as a spatial externality which in general attenuates with distance.
In this context spatial interactions can be regarded, for example, as
knowledge spillover effects from one location to another. The main idea
associated with knowledge spillovers is that innovation and new productive
knowledge flows more easily among agents which are located within the same
area (e.g. \cite{krugman1991geography}, \cite{breschi2001localised}). Thus
proximity is important in characterizing spatial spillovers
(\cite{baldwin2004agglomeration}, \cite{breinlich2013regional}). We incorporate
general spatial spillovers by interpreting the capital stock of each firm in a
broad sense to include knowledge along with physical capital (e.g.
\cite{romer1986increasing}). As argued by \cite{quah2002spatial} the effect of
capital on each firm's output, at any given point in time, does not depend
just on the accumulated stock by the firm up to this time, but on capital
accumulated in nearby locations by other firms. Thus the spatial externality
takes the form of a Romer (\cite{romer1986increasing}) type externality where,
by keeping all other factors in fixed supply, output is determined by own
capital stock and by an appropriately defined aggregate of capital stocks of
firms across the spatial domain. The capital stock aggregate is determined by
a distance-response function (see \cite{papageorgiou1983agglomeration}
for an early use of distance - response functions) that measures the strength
of the effect on the output of a firm in a certain location induced by the
capital stock accumulated by a firm in another location.

A positive distance-response function that attenuates with distance can be
interpreted as reflecting knowledge spillovers. A distance-response which is
negative indicates a negative externality such as generalised congestion
effects. Thus, by combining a distance-response function, centripetal and
centrifugal responses can be introduced with the  strength - positive or
negative - of these forces diminishing with distance.

Modeling the spatial spillovers by a general nonlinear integral operator, we
analyze the problem where forward-looking local planners maximise discounted
utility by choosing a local consumption path, subject to local capital
accumulation, by considering the spatial externality affecting there local
production as parametric. A REE in the whole spatial domain is defined as the
equilibrium local capital stock and consuption paths which emerge when the
capital stocks comprising the externality are determined endogenously through
the optimality conditions in each location.

The aim of this paper is to provide a rigorous mathematical study of rational expectation 
equilibria  emerging in a
 general growth model with spatial interactions, providing existence results  using techniques from the calculus of
variations in BV spaces and appropriate fixed point theorems. To the best of our knowledge such an analysis has not appeared in the literature before. Our detailed analysis highlights
the effects of conditions on the primitives of the economy on the behaviour of such equilibria and  clarifies aspects related to the
fine qualitative properties of the optimal path and optimal policy. Furthermore, the characterization of the rational expectations equilibria  is obtained in terms of a nonlocal Euler-Lagrange equation that may be used for further study of the qualitative spatio-temporal properties of the optimal capital alocations.
We think that the results of our paper shed some light towards a better understanding of the problem of
spatial growth, which is receiving increasing attention in recent economic literature (see e.g. \cite{brock2014optimal} or \cite{boucekkine2013spatial}). 

On the other hand the problem of optimal growth in its temporal version has attracted the attention of the more mathematically oriented literature in various forms. For example Ekeland has studied in detail and rigorously various aspects of the optimal growth problem, focusing on well-posedness as well as on the derivation of transversality conditions for the infinite horizon version of the model (see e.g. \cite{ekeland2010frank} and references therein).  Techniques from the theory of calculus of variations in Sobolev spaces have been employed for the study of the temporal problem (see e.g.  \cite{kravvaritis1991existence} or \cite{ le2007optimal}; see also \cite{sagara2007nonconvex} for the case of recursive utilities) whereas techniques from the theory of Hamilton-Jacobi equation and viscosity solutions have been used for the study of stochastic effects in the Ramsey model (see e.g. \cite{adachi2009optimal} and \cite{morimoto2008optimal}). However, to the best of our knowledge a rigorous mathematical treatment, with the aim of studying equilibrium questions, of the problem of optimal growth in spatial economies has not been addressed so far. 

It is the aim of this paper to make a few steps in this direction by presenting a rigorous treatment of a spatial economic growth model in the presence of spatial externalities, following up on previous work by Brock et al \cite{brock2014optimal}. In particular, we present a proof of existence of spatial rational expectations equilbria (SREE)  under rather general assumptions and furthermore we characterise them in terms of a nonlocal Euler-Lagrange equation. The treatment of the existence of SREE requires some detailed (and rather technical) results for a parametric individual optimisation problem in which the state of the externalities is assumed as a 
known functional parameter, which can be interesting in their own right. Indeed, these parametric problems are exactly a non-autonomous optimal growth (Ramsey) problem. To analyse it, it is prefarable to bypass the usual way of treating the autonomous problem, which consists in building (or guessing) solutions to suitable sufficient optimality conditions (which are expressed in terms of differential equations and transversality conditions), as it is not always easy to solve them. Thus, one has to follow the direct method of calculus of variations, i.e. first proving existence in a very weak and wide functional space (the space of BV functions, in our case) and then prove regularity and properties about the optimisers. For example, proving lower bounds on the optimal path, which is nowadays standard in the autonomous case, requires more attention in the setting of our paper, and is one of the key technical parts  of our work.

Once some desirable properties of the solution of the  parametric individual problem have been established, the existence of  SREE is obtained by an application of the Schauder fixed point theorem in an appropriate functional space setting. The establishment of the necessary estimates for the individual problem as well as the derivation of the nonlocal Euler-Lagrange characterization of the SREE is based on a Pontryagin maximum principle. Importantly, the Euler-Lagrange equation which characterises the SREE can be used for either for the numerical calculation of the SREE or for the derivation of qualitative aspects of the SREE such as for instance pattern formation behaviour.

The paper is organised as follows. In Section 2, a general model for optimal economic growth under spatial externalities is proposed. In Section 3 a detailed study of the  individual optimisation problem, where the externalities are taken as a given parameter, is performed. Using the results of Section 3, in Section 4 we study the problem of existence of REE and their characterization in terms of the Euler-Lagrange equation and provide the desired existence result.

\section{The model}

We consider a spatial growth model with spatial externalities. In particular let $D$ be a given geographic region (one can think at it as a compact subset of $\mathbb R^2$, but this is not crucial, and a more general metric space could also be used) and let the function $k : \Rp \times D \to \Rp$  model the spatio-temporal distribution of capital stock in the region. We are in a one all purpose commodity world we produce output which can be used for consumption or investment, that is accumulation of capita.  The production of output depends on spatial externalities, which display spatio-temporal variability as well, and are modeled in terms of the function $K : \Rp \times D \to (0, \infty)$. Both negative or positive externalities are included in the model in the sense that externalities may increase or decrease local production. Capital is assumed to be immobile but spatial effects are introduced through the varying level of externality effects $K$.

The capital accumulation satisfies the differential equation 
\begin{eqnarray*}
k'(t,z) = f(k(t,z),K(t,z))-c(t,z)
\end{eqnarray*}
where $k'(t,z)=\frac{\partial k}{\partial t}(t,z)$, $f$ is the net output given by the capital and the externalities, and $c : \Rp \times D \to \Rp$ is a function describing spatio-temporal consumption. By net output we mean the production, net of depreciation: this means that we include capital depreciation in $f$, i.e., $f(k,K)=\varphi(k, K) - \delta k$. Here $\varphi$ is a standard neoclassical production function and the parameter $\delta>0$ gives capital loss due to depreciation. A possible example for the production function can be a Cobb-Douglas type production function $\varphi(k,K)=A k^{\alpha} K^{\beta}$ for appropriate values of $\alpha$ and $\beta$ (with $\alpha + \beta <1$), so that we would have $f(k,K)=A k^{\alpha} K^{\beta}-\delta k$ . To simplify the exposition we assume that in each location labour is fixed and fully employed. So the arguments in the production and net output functions can be regarded as per capita quantities.

The  externalities $K$ are determined (in a self-consistent fashion i.e. endogenously) by the capital allocation $k$. The endogeneity is modelled by assuming that the externality effects at positions  $z \in D$ and time $t \in \Rp$ are expressed as
 $K(t,z)=(\S k)(t,z)$ where  $\S$ is an operator modelling the positive or negative externality effects.  As a particular example, we can consider operators $\S$ that are based on integral operators  e.g.  an operator the form 
\begin{eqnarray}\label{defi SS}
(\S k)(t,z)=\psi\left(\int_{D} w(z,y)k(t,y)dy\right)
\end{eqnarray}
where $w : D \times D \to \R$ is a suitable kernel function and $\psi\R\to I\subset\R_+$ is a nonlinearity, or even 

%. Depending on the type of externality effects that are to be modelled, we may also consider non-linear integral operators of the form 
%\begin{eqnarray*}
%(\S k)(t,z)=\psi\left( \int_{D} w(z,y)k(t,y)dy + c_0  \right),
%\end{eqnarray*}
%or
\begin{eqnarray*}
(\S k)(t,z)=\psi\left(\int_{D} w(z,y) \psi_0(k(t,y))dy\right),
\end{eqnarray*}
for a suitable function $\psi_0 : \Rp \to \R$. We need to include nonlinearities in the model for the externalities because we want a model where $K$ is positive (in many cases, bounded from below by a strictly positive constant), but we also want to consider possible negative externalities.

More precisely we will consider an operator $\mathcal S$ satisfying the following condition
\begin{assumption}\label{ass on S}
Given an interval $\bar I\subset \R_+$, the operator $\mathcal S:L^\infty(D)\to C(D;\bar I)$ is a (possibly nonlinear) Lipschitz map for the $L^\infty$ topology and it maps bounded sets of $L^\infty(D)$ into compact subsets of $C(D;\bar I)$, the space of continuous functions over $D$, taking values in the interval $\bar I$, endowed with the topology of uniform convergence.
\end{assumption}
The choice of the interval $\bar I$, which could be unbounded (but typically is bounded away from $0$), will be made so that the net output function $f$ and the externalities $K$ satisfy some compatibility conditions which will be precised later on.\smallskip

We will then use this operator for each fixed time, taking $K(t,\cdot)=\mathcal S(k(t,\cdot))$.\bigskip

Consider now a local  planner at location $z \in D$. The planner can only formulate expectations concerning the evolution of the externalities over the time horizon $(0,\infty)$, and his/her actions  directly influence  only the local variables taking the externality as parametric.  Given his/her expectation of the exogenous  temporal evolution of the effect of  the externalities $K(\cdot, z)$ at location $z$,  he/she maximises local intertemporal utility of consumption at location $z \in D$, i.e. solves the (parametric) maximization problem
\begin{equation}\label{1}
\begin{aligned}
& \max \int_{0}^{\infty}e^{-rt} U(c(t)) dt,  \,\,\, \mbox{subject to}\\
%& \mbox{subject to} \\
& k'(t,z) = f(k(t,z),K(t,z))-c(t,z) \\
& k(t,z) \ge 0, \,\,\, c(t,z) \ge 0,\,\,\, (t,z) \,\,\, a.e,
\end{aligned}
\end{equation}
where $U$ is an appropriate utility function and $r>0$ is a utility discount rate. The solution of problem \eqref{1} yields  an optimal consumption rule $c^{*}(\cdot ,z)=c^{*}(\cdot,z; K)$ and an optimal path $k^{*}(\cdot,z)=k^{*}(\cdot, z ;K)$, where we include $K$ in the notation to emphasise that this optimal path depends on the  exogenous externality state $K$ faced by the ``representative'' planner at location $z$. Each planner at location $z \in D$ solves a version of problem \eqref{1} (for the proper choice of $z$,  thus obtaining a function $k^{*}(\cdot,\cdot ; K) : \Rp \times D \times \Rp$, and a function $c^{*}(\cdot,\cdot ; K) : \Rp \times D \to \Rp$, such that $k^{*}(\cdot ,z ;K) : \Rp \to \Rp$ and $c^{*}(\cdot , z ; K) : \Rp \to \Rp$ are the optimal path and the optimal and the optimal consumption of the ``representative'' planner at location $z \in D$, given the instantaneous state of the externalities $K : \Rp \times D \to \R$.  Recall our assumption that externalities are determined endogenously in terms of the operator $\S$. This implies that if all individual  agents, at any $z \in D$ are well informed concerning the state of the externalities $K$, and using this knowledge solve their individual problems \eqref{1}  determining the optimal function $k^{*})\cdot, \cdot , K)$, then consistency of the model imposes that $K = \S k^{*}$.

The above discussion leads to the definition of the operator $\T$ by $K \mapsto k^{*}(\cdot, \cdot, K) \mapsto \S k^{*}(\cdot, \cdot, K)$, in terms of which  we may now define a rational expectations equilibrium for the spatial economy.

\begin{definition}
A fixed point of the operator $\T$, defined by  $K \mapsto k^{*}(\cdot, \cdot, K) \mapsto \S k^{*}(\cdot, \cdot, K)$  is a rational expectations equilibrium for the spatial economy.
\end{definition} 

\begin{remark}
Our definition of a rational expectations equilibrium for the spatial economy is motivated by the interpretation of the above scheme as a best-response scheme. In particular given that the agent at location $z \in D$, anticipates that the externality effects at $z \in D$  are going to develop in the future as $K(\cdot, z)$ he/she designs the optimal path for the economy at this location as the best response to this anticipated externality effects, which is the solution $k^{*}(\cdot,z ; K(\cdot, z))$ to problem \eqref{1}. This is true for any agent located at any other site $z' \in D$. Their best responses to their anticipated level of externalities will clearly contribute to the formulation of the actual state of the externalities which will become $\S k^{*}(\cdot , \cdot ; K(\cdot, \cdot))$.  This actual state can be interpreted as an adjustment of the agents anticipation of the level of the externalities, hence the level of externalities where this adjustment procedure reacertains the agents anticipation is called a spatial rational expectations equilibrium.
\end{remark}

To this point we deliberately refrain from setting a detailed functional space setting for the problem, and we prefer to introduce this later on (see Sections \ref{INDIV-OPT} and \ref{RAT-EXP}) after the necessary technical estimates that clarify our choice are presented. Here, we only impose assumptions on the primitives of the problem and in particular to the production function and the utility function.

\begin{assumption}[ Assumptions on $f$] \label{f-ASSUMP}  The net output function $f$ satisfies:
\begin{itemize}
\item[$(i)$] $f$ is continuous on $\Rp\times\Rp$ and $f(0,K)=0$ for every $K \in \bar I$ (where $\bar I$ is the interval in \ref{ass on S}).
\item[$(ii)$]  $f$ is $C^1$ on $(0,+\infty)\times (0,+\infty)$ and by $f_{k}$, $f_{K}$ we denote the two partial derivatives.
\item[$(iii)$]  there exists a constant $\delta>0$ such that $f_k\geq -\delta$, and moreover $f_k$ is bounded from above on any set of the form $[a,\infty)\times [0,c]$, for $a>0$ and $c>0$.
\item[$(iv)$]  for every $M$ there is $\bar k(M)$ such that $f(k,K)\le 0$ for every $K\le M$ and $k\ge \bar k(M)$; moreover, $\bar k(M)=o(M)$ as $M\to\infty$.
\item[$(v)$]  there exists $k_1>0$ such that $f(k,K)>0$ for arbitrary $K\in \bar I$ and $k<k_1$.
\item[$(vi)$] $k \mapsto f(k,K)$ is strictly concave for every $K\in \bar I$.
\end{itemize}
\end{assumption}

These assumptions can be easily seen to be satisfied by the Cobb-Douglas net output function $f(k,K)=Ak^\alpha K^\beta-\delta k$ for $0<\alpha,\beta<1$ and $\alpha+\beta<1$ with the only important observation that point {\it (v)} requires $\inf \bar I>0$.

\begin{assumption}[Assumptions on $U$] The utility function $U : \Rp \to \Rp$ is a concave, strictly increasing function such that $U'(0)=\infty$ and $U'(\infty)=0$. 
\end{assumption}

\begin{remark}
Explicit dependence in $z$ can be included both  in the net output function $f$ and/or the utiity function $U$, provided sufficient regularity conditions are imposed on such a dependence.
\end{remark}

\section{The individual optimisation problem \eqref{1}  }
\label{INDIV-OPT}

In this section we consider $K : \Rp  \to (0,\infty)$ (here the variable $z$ will be considered as fixed and will not play any role, hence  explicit dependence on $z$  will be omitted) as a given bounded function and consider the parametric optimisation problem
\begin{equation}\label{3}
\begin{aligned}
& \max_{c} \int_{0}^{\infty}e^{-rt} U(c(t)) dt, \,\,\, \mbox{subject to} \\
%& \mbox{subject to} \\
& k'(t) = f(k(t),K(t))-c(t) \\
& k(t) \ge 0, \,\,\, c(t) \ge 0,\,\,\,  \,\,\, a.e.
\end{aligned}
\end{equation}
We will also define the admissible set for the pairs $(k,c)$ 
\begin{eqnarray*}
\AK :=\{ (k,c) \,\,\, : \,\,\, k(t) \ge 0, \,\,\, c(t) \ge 0 \,\,\, k'(t)+ c(t) \le f(k(t),K(t)), \,\,\, t \in \Rp \},
\end{eqnarray*} 
postponing the specification of the  exact functional space setting for the pairs $(k,c)$ until the formulation of Lemma \ref{L1} where specific a priori bounds will be obtained, motivating our choice.

We point out that, thanks to the assumptions on $f$, whenever $K$ is bounded, then $t\mapsto f(k(t),K(t)$ is also bounded from above, by a constant that depends on $K$ and will be denoted by $M(K)=:C_1$.

We need to introduce the following functional spaces.  By $L^{p}(I)$, $p \in [1,\infty]$ we denote the standard Lebesgue spaces on $I \subset \Rp$ whereas $\Ml$ is the space of real valued Radon measures on $\Rp$, $\MII$ is the space of real valued finite Radon measures on $I \subset \Rp$, compact, the latter equipped by the norm
 $$\| \mu \|_{\MII}=|\mu |(I)=\sup \left\{  \int_{I} \varphi d\mu, \,\, ; \,\,\,  \varphi \in C(\R_+), \,\,\, \| \varphi \|_{\infty} \le 1 \right\}.$$
$BV(I)=\{  u : \Rp \to \R\,\, : \,\, u \in L^{1}(I), \,\, Du \in \MII\}$,  for every $I \subset \Rp$, which  is a Banach space when equipped with the norm $\| u\|_{BV(I)}:= \| u\|_{L^1(I)}+ \| Du \|_{\MII}$ and may be considered as an extension of $W^{1,1}(I)$, in the sense that the distributional derivative $Du$ is no longer an integrable function but rather a Radon measure, whereas $\BVl$ is the space of functions that belong to $BV(I)$ for every $I \subset \Rp$, compact.

The following lemma establishes some a priori bounds for the admissible set while at the same time indicates the proper functional space setting for the problem.

\begin{lemma}[A priori bounds for $\AK$] \label{L1}% \label{A-PRIORI-AK}  
Consider any pair $(k,c) \in \AK$.  Then,  $k \in L^\infty(\R_+)\cap \BVl$,  $c \in \Ml$, $k' \in \Ml$ and  for any compact subset  $I \subset \Rp$,  we have $\|c\|_{\MII} \le C$ and $\| k'\|_{\MII} \le C$, with the constant $C$ depending on the choice of $I$. In particular if $I=(0,t)$, we can choose the constant $C=C(t)=\bar C_0 + \bar C_1 t$ for suitable $\bar C_0,\bar C_1$.
 \end{lemma}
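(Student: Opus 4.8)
The plan is to read the admissibility constraint $k'+c\le f(k,K)$ entirely in the sense of measures: $Dk$ is a signed Radon measure, $c\ge 0$ is a measure, and the constraint means that $\nu:=f(k(\cdot),K(\cdot))\,\mathcal L^1-Dk-c$ is a \emph{nonnegative} measure on $\Rp$. The bounds then come from combining this single inequality with the nonnegativity $k\ge0$, $c\ge0$ and the structural Assumption~\ref{f-ASSUMP}. The first, preparatory, observation is the uniform upper bound on the right-hand side already recorded before the statement: since $K$ is bounded, say $K\le M$, point $(iv)$ gives a level $\bar k(M)$ with $f(k,K)\le0$ for $k\ge\bar k(M)$, while continuity ($(i)$) bounds $f$ on the compact box $[0,\bar k(M)]\times[0,M]$; together these yield $f(k,K(t))\le C_1:=M(K)$ for every $k\ge0$ and every $t$.

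The key step is the $\Linl$ bound on $k$, which I would obtain by a maximum-principle/barrier argument. Since $\nu\ge0$ and $c\ge0$ we get $Dk\le f(k,K)\,\mathcal L^1$; as the right-hand side is absolutely continuous, the singular part of $Dk$ is nonpositive—in particular $k$ has no upward jumps—and the precise representative satisfies $k(t)-k(s)\le\int_s^t f(k,K)\,d\tau$ for $s<t$. Set $L:=\max\{k(0^+),\bar k(M)\}$, finite since $k\in\BVl$ forces $k(0^+)$ to exist and $k\ge0$. On any interval where $k>\bar k(M)$ one has $f(k,K)\le0$, so there $k$ is nonincreasing; starting from $k(0^+)\le L$ and being unable to jump up, $k$ can never cross the level $L$ upward, giving $\|k\|_{\Linl}\le L$.

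The remaining bounds follow by integrating the constraint on $I=(0,t)$. Testing $\nu\ge0$ against the interval $(0,t)$ and using $Dk((0,t))=k(t^-)-k(0^+)$ gives $\int_0^t f\,d\tau-(k(t^-)-k(0^+))-c((0,t))\ge0$; since $c\ge0$, $k(t^-)\ge0$ and $\int_0^t f\le C_1 t$, this yields $\|c\|_{\mathcal M((0,t))}=c((0,t))\le k(0^+)+C_1 t$, so $c\in\Ml$. For $k'=Dk$, I would bound the positive part via $Dk\le C_1\mathcal L^1$, so $(Dk)^+((0,t))\le C_1 t$, and then use $|Dk|((0,t))=2(Dk)^+((0,t))-Dk((0,t))\le 2C_1 t+k(0^+)\le 2C_1 t+L$. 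Hence $k\in\BVl$ with $\|k'\|_{\mathcal M((0,t))}\le L+2C_1 t$. All constants are of the announced affine form $\bar C_0+\bar C_1 t$, with $\bar C_1$ governed by $C_1=M(K)$.

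The main obstacle is the rigorous execution of the barrier step in the low-regularity $BV$ setting: one must first verify that the constraint forbids upward singular mass (so the scalar comparison $k(t)-k(s)\le\int_s^t f$ holds for the good representative), and then run the barrier argument carefully across the possible downward jumps of $k$, e.g. by considering the last time before a putative excursion above $L$ at which $k\le\bar k(M)$. Once this is handled, every other estimate is a direct integration of the measure inequality.
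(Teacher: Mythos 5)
Your proposal is correct and follows essentially the same route as the paper: the uniform upper bound $C_1$ on $f(k,K)$, the $L^\infty$ bound on $k$ from Assumption \ref{f-ASSUMP}(iv), the bound $(k')^+\le C_1$ combined with $k\ge 0$ and the fundamental theorem of calculus to control the negative part (your identity $|Dk|=2(Dk)^+-Dk$ is exactly the paper's decomposition $|k'|=\kdp+\kdm$), and finally the bound on $c$ from the saturated use of the constraint. The only difference is cosmetic: you integrate the constraint directly to bound $c$ before $k'$, and you spell out the measure-theoretic details of the $L^\infty$ step (nonpositivity of the singular part of $Dk$, the barrier argument across downward jumps) which the paper states tersely.
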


\begin{proof}  Let $(k,c) \in \AK$.  Consider first the constraint 
\begin{equation}\label{4}
k'+c \le f(k,K),
\end{equation}
which is considered in the weakest possible form, i.e. in the sense of measures.
Since $c$ is nonnegative and $f$ is negative for large $k$ (say for $k\geq \bar k$), then $k$ is bounded from above by $\max\{k(0),\bar k\}$. Hence, $k\in  L^\infty(\R_+)$.
%and integrate over any interval $[0,t]$, to obtain
%\begin{eqnarray*}
%k(t)+ \int_{0}^{t} c(s) ds \le k(0) + \int_{0}^{t}f(k(s),K(s)) ds \le  k(0) + C_1 \, t.
%\end{eqnarray*}
%Since $c(s) \ge 0$ the above leads to the bound
%\begin{eqnarray*}
%0 \le k(t) \le k(0) + C_1 \, t,
%\end{eqnarray*}
%which implies that $k \in \Lil$.

We return to \eqref{4}, and note that since the right hand side is positive and bounded above by $C_1$,  we have that $k'(t) \le C_1$, for any $t$. This provides upper $L^\infty$ bounds for $k'$, but we also need lower bounds if we want to bound $k'$  in $\in \Ml$.
% As far as $c$ is concerned, $c\geq 0$ is enough to conclude, and in particular we have
%\begin{equation*}
%\int_{0}^{t} |c| = \int_{0}^{t} c\le C_1t. 
%\end{equation*}
%As far as $k'$ is concerned, we will use $k \ge 0$, and proceed as follows. 
We break up $k'$ into its positive and negative part as $k'=\kdp - \kdm$. The positive part is bounded, by \eqref{4}, and the upper bound is $C_1$. We proceed to obtain a bound for $\kdm$. 
% We express \eqref{4} (upon integration over any interval $[0,t]$)  as
Clearly,
\begin{eqnarray*}
k(t)-k(0)=\int_{0}^{t}k' = \int_{0}^t \kdp - \int_{0}^{t}\kdm,
\end{eqnarray*}
which, using the fact that $k(t) \ge 0$ and the upper bound $ \kdp \le C_1$, leads to the estimate
\begin{eqnarray*}
-k(0) \le k(t)-k(0) = \int_{0}^{t} \kdp - \int_{0}^{t}\kdm \le C_1 \, t - \int_{0}^{t} \kdm,
\end{eqnarray*}
which upon rearrangement provides us with the estimate
\begin{eqnarray}\label{5}
\int_{0}^{t} \kdm \le k_{0} + C_1 \, t.
\end{eqnarray}
This allows us to estimate the total variation of $|k'|=\kdp + \kdm$ as
\begin{eqnarray}\label{6}
\int_{0}^{t} |k'| = \int_{0}\kdp + \int_{0} \kdm \le C_1 \, t + k_{0} + C_1 \, t =k_{0} + 2 C_1 \, t,
\end{eqnarray}
where we used  \eqref{5}. This estimate implies that $\| k'\|_{\MII} < C$ for any compact $I=[0,t]$, where $C$ depends on $t$.

Having obtained the desired bound for $k'$ we return once more to \eqref{4} which yields, $0 \le c \le C_1 - k'$, from which the desired bound on $c$ follows.
\end{proof}

We will now give a precise functional setting for Problem \eqref{3}. Let us define, for fixed $k_0\Rp$,
\begin{eqnarray*}
\AK :=\left\{ (k,c) \,\, : \,\, k\in  \BVl, \,c\in \Ml, \begin{array}{l}k \ge 0, \, k(0)=k_0,\, c \ge 0, \\ k'+ c \le f(k,K)\end{array} \right\},
\end{eqnarray*} 
where the last inequality is to be intended as an inequality between measures (the right-hand side being the measure on $\Rp$ with density $f(k(\cdot),K(\cdot))$). On the set of pairs $(k,c)$ (and in particular on $\AK$), we consider the following notion of (weak) convergence: we say $(k_n,c_n)\deb (k,c)$ if $c_n$ and $k'_n$ weakly-* converge to $c$ and $k'$, respectively, as measures on every compact interval $I\subset\Rp$. It is important to observe that, thanks to the fact that the initial value $k(0)$ is prescribed, the weak convergence as measures $k_n'\deb k'$ also implies $k_n\to k$ a.e., as
$$k_n(t)=k_0+\int_0^t k_n'(s)ds\to k_0+\int_0^t k'(s)ds=k(t)$$
for every $t$ which is not an atom for $k'$ (i.e. every continuity point $t$ for $k$). This condition is satisfied by all but a countable set of points $t$.

For $T\in \Rp\cup\{+\infty\}$, we also define $J_T\,:\, \Ml\to[0,+\infty]$ through
$$ J_T(c):= \int_{0}^{T}e^{-rt} U(c^{ac}(t)) dt, $$
where $c^{ac}$ is the absolutely continuous part of the measure $c$ (remember that every positive and locally finite measure $c$ on $\R$ can be decomposed as a sum $c^{ac}(t)dt+c^{sing}$, where the first part has density $c^{ac}$ w.r.t. the Lebesgue measure on $\R$, and the second part is singular: it could include atoms or other measures concentrated on sets with zero Lebesgue measure).

\begin{lemma}[Approximation and upper semicontinuity  of $J_{\infty}$]  \label{L2} The following hold:
\begin{itemize}
\item[(i)] For any $\epsilon >0$, there exists $T=T(\epsilon) < \infty$ such that $J_{\infty}(c) \le J_{T}(c)+\epsilon$ for any $(k,c) \in \AK$. .
\item[(ii)] $J_{\infty}$ is upper semicontinuous on $\AK$.
\end{itemize}
\end{lemma}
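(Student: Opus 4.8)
The plan is to prove the two statements separately, deriving (i) by a direct tail estimate and then using it to reduce (ii) to the upper semicontinuity of the finite–horizon functional $J_T$.

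For (i): since $U$ is concave and increasing it admits an affine majorant $U(x)\le A+Bx$ on $\Rp$ for suitable constants $A,B$ (e.g.\ the tangent line at $x=1$, so $B=U'(1)$). I would bound the tail $\int_T^\infty e^{-rt}U(c^{ac})\,dt$ by splitting $[T,\infty)$ into unit intervals $[n,n+1]$, $n\ge T$ integer. On each of them Jensen's inequality (concavity, unit length) together with $c^{ac}\,dt\le c$ and the monotonicity of $U$ gives
\[
\int_n^{n+1}U(c^{ac})\,dt\le U\Big(\int_n^{n+1}c^{ac}\,dt\Big)\le U\big(c([0,n+1])\big)\le U\big(\bar C_0+\bar C_1(n+1)\big),
\]
the last bound being the uniform mass estimate of Lemma \ref{L1}. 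Weighting by $e^{-rt}\le e^{-rn}$ and summing yields
\[
\int_T^\infty e^{-rt}U(c^{ac})\,dt\le \sum_{n\ge T}e^{-rn}\big(A+B(\bar C_0+\bar C_1(n+1))\big),
\]
the tail of a convergent series. As this bound is independent of the pair, it is $<\epsilon$ once $T=T(\epsilon)$ is large enough, uniformly over $\AK$, and since $U\ge 0$ we conclude $J_\infty(c)=J_T(c)+\int_T^\infty e^{-rt}U(c^{ac})\,dt\le J_T(c)+\epsilon$.

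For (ii): by part (i), given $\epsilon>0$ there is a single finite $T$ with $J_\infty(c_n)\le J_T(c_n)+\epsilon$ for all $n$, while $J_T(c)\le J_\infty(c)$ because $U\ge 0$. Hence it suffices to show $\limsup_n J_T(c_n)\le J_T(c)$ whenever $(k_n,c_n)\deb(k,c)$ and then let $\epsilon\to 0$; the problem thus reduces to the upper semicontinuity of $J_T$ under weak-$*$ convergence of the $c_n$ on the compact interval $[0,T]$. To establish this I would partition $[0,T]$ into intervals $I_j$ of length $h$, choosing the endpoints among the co-countably many points that are not atoms of the finite measure $c$. On each $I_j$, Jensen and $c_n^{ac}\,dt\le c_n$ give $\int_{I_j}e^{-rt}U(c_n^{ac})\,dt\le e^{-rt_j}|I_j|\,U\big(c_n(I_j)/|I_j|\big)$; since the endpoints are not atoms of $c$, weak-$*$ convergence yields $c_n(I_j)\to c(I_j)$, so summing over $j$ gives $\limsup_n J_T(c_n)\le \Phi_h:=\sum_j e^{-rt_j}|I_j|\,U\big(c(I_j)/|I_j|\big)$.

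It then remains to let $h\to 0$ and verify $\lim_{h\to0}\Phi_h=J_T(c)$. On the portion of $[0,T]$ where $c$ is comparable to Lebesgue measure this follows from the Lebesgue differentiation theorem ($c(I_j)/|I_j|\to c^{ac}(t)$ a.e.), the continuity of $U$ and of $e^{-rt}$, and dominated convergence, the domination coming from the affine majorant of $U$ and the a priori bounds of Lemma \ref{L1}. The decisive point—and the main obstacle—is the contribution of the singular part $c^s$: on an interval carrying singular mass the average $y=c(I_j)/|I_j|\to\infty$, and here I would write $|I_j|\,U\big(c(I_j)/|I_j|\big)=c(I_j)\cdot U(y)/y$; since the Inada-type condition $U'(\infty)=0$ forces $U(y)/y\to 0$ as $y\to\infty$, these terms vanish as $h\to 0$. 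Thus concavity controls the approximation while $U'(\infty)=0$ guarantees that the singular part of the limit measure contributes nothing, which is precisely what makes $J_\infty$, defined through the absolutely continuous part alone, upper semicontinuous. Carrying out the limit exchange $\lim_{h\to0}\Phi_h=J_T(c)$ rigorously, keeping the singular and absolutely continuous contributions separate and invoking the bounds of Lemma \ref{L1} for the needed domination, is the technically delicate part of the argument.
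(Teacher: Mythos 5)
Your part (i) is correct and is essentially the paper's own argument: the paper likewise splits the tail, uses the affine majorant of $U$ coming from concavity, and invokes the linear-in-$t$ mass bound of Lemma \ref{L1}; the only cosmetic difference is that it sums over dyadic blocks $[2^kT,2^{k+1}T]$ where you use unit intervals (your extra Jensen step is harmless but unnecessary). The reduction of part (ii) to the upper semicontinuity of the finite-horizon functional $J_T$ --- via (i), the inequality $J_T\le J_\infty$ from $U\ge 0$, and letting $\epsilon\to 0$ --- is also exactly the paper's.

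The divergence, and the gap, is in how upper semicontinuity of $J_T$ itself is established. The paper does not prove it by hand: it cites the standard semicontinuity theorem for convex integral functionals on measures, $c\mapsto \int h(c^{ac})\,dt+\int h^\infty(c^{sing})$ with $h=-U$ and recession function $h^\infty=0$ (reference \cite{attouch2014variational}). Your observation that $U'(\infty)=0$ forces $U(y)/y\to 0$ is precisely the statement $h^\infty=0$, so you have correctly identified the mechanism by which the singular part of the limit measure is harmless. But your direct argument stops exactly at its crux: the limit exchange $\lim_{h\to 0}\Phi_h=J_T(c)$ is asserted, not proven, and the tools you name for it would not suffice. ``Dominated convergence, with domination from the affine majorant of $U$ and the bounds of Lemma \ref{L1}'' is not available: Lemma \ref{L1} bounds \emph{masses}, not pointwise values, and the block averages $c(I_j)/|I_j|$ admit no fixed integrable dominating function precisely when mass concentrates --- this is a reverse-Fatou situation in which domination genuinely fails (even for the absolutely continuous part, averages of an $L^1$ function are controlled only by the maximal function, which need not be integrable; one needs $L^1$-convergence of the averages plus uniform integrability, i.e.\ Vitali). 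For the singular part one must first separate the two contributions, e.g.\ via the concavity inequality $U(a+s)\le U(a)+U(s)-U(0)$, and then run a splitting argument over the intervals according to the size of $c^{sing}(I_j)/|I_j|$, together with a covering of the Lebesgue-null set carrying $c^{sing}$ by finitely many small intervals, before $U(y)/y\to 0$ can be brought to bear. This program can be completed, but doing so amounts to reproving the cited theorem in this special case; as written, your proposal defers the only genuinely hard step --- as you yourself acknowledge --- so it does not yet constitute a proof.
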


\begin{proof} (i)  We note that
\begin{eqnarray*}
J_{\infty}(c)=J_{T}(c)+\int_{T}^{\infty}e^{-r t} U(c^{ac}(t)) dt.
\end{eqnarray*}
For this part of the proof, we will write $c$ instead of $c^{ac}$ for simplicity.
By the additivity of the integral, the second  term is expressed as
\begin{eqnarray}\label{8}
\int_{T}^{\infty}e^{-r t} U(c(t)) dt =\sum_{k=0}^{\infty} \int_{T_{k}}^{T_{k+1}} e^{-r t} U(c(t)) dt
\end{eqnarray}
where $T_{k}=2^{k}T$, $k \in \N$. %Note that $\Delta_{k}T:=T_{k+1}-T_{k}=T_{k}$.
Since $U$ is concave,  it is bounded above by an affine function $U(c) \le C_3 + C_4 \,c $ with $C_3,C_4>0$,  so that for each $k \in \N$,
\begin{eqnarray*}
 \int_{ T_{k}}^{T_{k+1}} e^{-r t} U(c(t)) dt \le \int_{ T_{k}}^{T_{k+1}} e^{-r t}( C_3 + C_4 \, c(t)) dt \le e^{- r T_{k}} \int_{T_{k}}^{T_{k}} ( C_3 + C_4 \, |c|(t)) dt  \\
\le e^{-t T_{k}} (C_5 + C_6 T_{k+1}),
%
%\\ \le e^{-r T_{k}} \Delta_{k}T\, C_3 + e^{-r T_{k}} \int_{ T_{k}}^{T_{k+1}} C_4 |c|(t) dt  \le  e^{-r T_{k}} T_{k} C_3 + e^{-r %T_{k}} C_{4} \int_{T_{k}}^{T_{k+1}} C(t) dt \\
%\le  e^{-r T_{k}}\left(C_3 \Delta_{k}T + C_4 \left((\bar C_0  \Delta_k T + \frac{\bar C_1}{2} (\Delta_k T)^2\right)\right) =e^{-rT_k} (C_5 T_{k} + C_6 T_{k}^2).
\end{eqnarray*}
where in the last inequality we used Lemma \ref{L1}.

Substituting this estimate into \eqref{8} we obtain
\begin{eqnarray*}
\int_{T}^{\infty}e^{-r t} U(c(t)) dt \le  C_5 \sum_{k=0}^{\infty} e^{-rT_{k}}  + C_6 \sum_{k=0}^{\infty} e^{-r T_{k}} T_{k+1} = C_5   \sum_{k=0}^{\infty} e^{-r T 2^{k}}  + 2 C_6 T \sum_{k=0}^{\infty} e^{-r T 2^{k}} 2^{k}\\
=C_5  e^{-r T} \sum_{k=0}^{\infty} e^{-r T (2^{k}-1)}  + C_6 e^{-r T} T\sum_{k=0}^{\infty} e^{-r T (2^{k}-1)} 2^{k}= : \varphi(T).
\end{eqnarray*}
It can be seen that $\varphi(T)< \infty$ and that $\lim_{T \to \infty} \varphi(T)=0$, from which the claim follows.

(ii)  Consider a sequence $\{c_n\} \in \AK$ such that $c_n \ws c$ in $\Ml$. By (i) for any $\epsilon >0$, there exists $T=T(\epsilon)$ such that 
\begin{eqnarray}\label{9}
J_{\infty}(c_n) \le J_{T}(c_n) + \epsilon, \,\,\, \forall \, n \in \N.
\end{eqnarray}
Furthermore,  using $U\ge 0$,
\begin{eqnarray}\label{10}
J_{T}(c) \le J_{\infty}(c), \,\,\, \forall \, c \in \Ml.
\end{eqnarray}
The functional $J_{T}$ is upper semicontinuous with respect to weak$-*$ convergence in $\Ml$. Indeed, since $U$ is concave, $-U$ is a convex function, and we can apply standard result on the semicontinuity of
$$c\mapsto \int h(c^{ac}(x))dx+\int h^{\infty}(c^{sing}),$$
where here $h=-U$ and $h^\infty=0$ (see, for instance
\cite{attouch2014variational}).  %{\color{red} or maybe cite Attouch-Buttazzo-Michaille}

Then, taking the limit supremum on both sides of \eqref{9}  yields
\begin{eqnarray*}
\lim\sup_{n} J_{\infty}(c_n) \le \lim\sup_{n} J_{T}(c_n) + \epsilon \le J_{T}(c) + \epsilon \le J_{\infty}(c)+ \epsilon
\end{eqnarray*}
where we used first the upper semicontinuity of $J_{T}$ and then \eqref{10}. Taking the limit as $\epsilon \to 0^{+}$ we obtain the upper semicontinuity of $J_{\infty}$.
\end{proof}

\begin{proposition}
Given any continuous and bounded function $K:\R_+\to\R_+$, the parametric maximization problem for the individuals \eqref{3} admits a maximiser $(k^{*},c^{*}) \in (\BVl \cap \Linl) \times \Ml$. %If the utility function is strictly concave, and the differential equation $k'=f(k,K)-c$, given $c$ admits a unique solution, then the maximiser is unique.
\end{proposition}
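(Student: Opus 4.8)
The plan is to run the direct method of the calculus of variations. First I would take a maximizing sequence $(k_n,c_n)\in\AK$ for $J_\infty$. By Lemma \ref{L1}, on every compact interval $I=[0,t]$ the total masses $\|c_n\|_{\MII}$ and $\|k_n'\|_{\MII}$ are bounded by a constant $C(t)$ independent of $n$; in particular the $L^\infty$ bound $\max\{k_0,\bar k\}$ on $k_n$ is uniform in $n$, since the initial datum $k_0$ and the threshold $\bar k=\bar k(M(K))$ do not depend on $n$. As $C(I)$ is separable, the sequential Banach--Alaoglu theorem gives weak-$*$ compactness of bounded sets in $\MII$; applying it on the intervals $[0,m]$, $m\in\N$, and extracting a diagonal subsequence (not relabeled), I obtain $c^*,\mu\in\Ml$ with $c_n\ws c^*$ and $k_n'\ws\mu$ on every compact interval. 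I then set $k^*(t):=k_0+\int_0^t d\mu$, so that $(k^*)'=\mu$; as observed in the text, the prescription $k_n(0)=k_0$ upgrades the weak convergence of the derivatives to pointwise convergence $k_n(t)\to k^*(t)$ at every $t$ that is not an atom of $\mu$, hence $k_n\to k^*$ a.e.

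Next I would verify $(k^*,c^*)\in\AK$. Nonnegativity of $c^*$ is inherited from the $c_n$, since a weak-$*$ limit of nonnegative measures is nonnegative; $k^*\ge 0$ a.e. follows from $k_n\ge0$ and the a.e. convergence, $k^*\in\Linl$ with the same uniform bound, and $k^*(0)=k_0$ by construction. The crucial point is to pass the constraint $k_n'+c_n\le f(k_n,K)$ to the limit. The left-hand sides converge weakly-$*$ to $(k^*)'+c^*$. For the right-hand side I would use that, by Assumption \ref{f-ASSUMP}(iii) together with $f(0,K)=0$, one has $-\delta\,k_n\le f(k_n,K)\le C_1$, so the densities $f(k_n(\cdot),K(\cdot))$ are uniformly bounded; combined with $k_n\to k^*$ a.e. and the continuity of $f$, dominated convergence on each compact $I$ gives $f(k_n,K)\to f(k^*,K)$ in $L^1(I)$, and therefore $f(k_n,K)\,dt\ws f(k^*,K)\,dt$ as measures. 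Consequently the nonnegative measures $f(k_n,K)\,dt-k_n'-c_n$ converge weakly-$*$ to $f(k^*,K)\,dt-(k^*)'-c^*$, which is then nonnegative, i.e. $(k^*)'+c^*\le f(k^*,K)$. Thus $(k^*,c^*)\in\AK$.

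Optimality is then immediate from Lemma \ref{L2}(ii): since $(k_n,c_n)$ is maximizing and $J_\infty$ is upper semicontinuous for $\deb$-convergence on $\AK$,
$$ J_\infty(c^*)\ge\limsup_n J_\infty(c_n)=\sup_{\AK}J_\infty, $$
while the reverse inequality holds because $(k^*,c^*)$ is admissible; the supremum is finite by Lemma \ref{L2}(i) together with the mass bounds of Lemma \ref{L1}. Hence $(k^*,c^*)$ is a maximiser with $k^*\in\BVl\cap\Linl$ and $c^*\in\Ml$, as claimed. I expect the main obstacle to be precisely the passage to the limit in the nonlinear constraint: the control $c_n$ and the derivative $k_n'$ converge only weakly-$*$ as measures, so a priori one cannot evaluate the nonlinearity $f$ on a weak limit. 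The mechanism that rescues the argument — and which deserves emphasis — is that the fixed initial condition converts the weak convergence of $k_n'$ into a.e. convergence of $k_n$, after which the uniform $L^\infty$ bound and the one-sided control $f_k\ge-\delta$ force $f(k_n,K)$ to converge strongly in $L^1_{loc}$, reconciling the weak convergence of the linear terms with the nonlinear one.
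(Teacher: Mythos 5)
Your proof is correct and takes essentially the same route as the paper's: a maximizing sequence, compactness from the bounds of Lemma \ref{L1}, a.e.\ convergence of $k_n$ coming from the prescribed initial value $k(0)=k_0$, passage to the limit in the constraint $k'+c\le f(k,K)$ using the boundedness and continuity of $f$, and optimality from the upper semicontinuity of $J_\infty$ in Lemma \ref{L2}. The only difference is one of detail: your explicit dominated-convergence argument (using $-\delta k_n\le f(k_n,K)\le C_1$ to get $f(k_n,K)\to f(k^*,K)$ in $L^1_{loc}$) simply fills in a step the paper states more tersely.
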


\begin{proof}
Consider $(k_n,c_n)$ a maximizing sequence for \eqref{3}. Since $(k_n,c_n) \in \AK$ for any interval $I=[0,T]$, we conclude by Lemma \ref{L1} that $k_n$ is bounded in $\BVl$, and $c_n $ in $ \Ml$. Using the weak$-*$ compactness properties of  $\Ml$, this guarantees the existence of $k \in \BVl$ and $\zeta, c \in \Ml$ such that up to subsequences
\begin{eqnarray*}
k_n \to k & a.e. \\
k_n' \ws \zeta& \mbox{ in } \Ml, \\
c_n \ws c& \mbox{ in }  \Ml,
\end{eqnarray*}
where by standard arguments we see that $\zeta=k'$.

Our first task is to show admissibility of the limit $(k,c)$. It is straightforward to check that $c \ge 0$ and $k \ge 0$. It remains to check that $k'+c \le f(k,K)$. In order to see this, note first that since $(k_n,c_n) \in \AK$ for every $n \in \N$, we have that
\begin{eqnarray}\label{8bis}
k_n'+ c_n \le f(k_n,K), \,\,\, \forall \, n \in \N.
\end{eqnarray}
We need to pass to the limit as $n \to \infty$ (along the converging subsequence) in the above. The left hand side, is linear and 
$k_n'+c_n \ws k'+c$. Some care must be taken for the nonlinear right hand side. Since $f$ is continuous and bounded from above,  it is sufficient to use the pointwise convergence $k_n(t) \to k(t)$ a.e. in $t \in [0,T]$, guaranteed by the local BV bound on $k_n$.

To show that $(k,c)$ is a maximiser to Problem \eqref{3}, we use the upper semicontinuity result,  $\lim\sup_n J_{\infty}(c_n) \le J_{\infty}(c)$ contained in Lemma \ref{L2}. 
\end{proof}

The above result  guarantees the existence of a maximiser $(k,c)$ where both $k'$ and $c$ are measures. Before going on with the analysis, we will show that, actually, they have not singular parts, i.e. they are functions. In terms of the state function $k$, this means $k\in W^{1,1}_{loc}$. 
\begin{lemma}\label{cisac}
Any optimiser $(k,c)$ satifies $c \in L_{loc}^{1}(\R_{+})$.
\end{lemma}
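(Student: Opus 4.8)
The claim is that an optimizer $(k,c)$ cannot have a singular part in the control $c$ (equivalently, $k'$ has no singular part, so $k\in W^{1,1}_{loc}$). The plan is to argue by contradiction: if $c$ had a nontrivial singular part $c^{sing}$ on some compact interval, I would construct a competitor that spreads out this singular mass into an absolutely continuous contribution and strictly increases the objective $J_\infty$, contradicting optimality. The intuition is that $U$ is strictly concave with $U(0)\ge 0$ and $U'(0)=\infty$, so concentrating consumption in a singular measure (which, under the convention $h^\infty=0$ used in Lemma \ref{L2}, contributes \emph{nothing} to $J_\infty$) is strictly wasteful: the same total ``investment effect'' of $c^{sing}$ on the budget constraint can instead be realized as a small absolutely continuous consumption earning strictly positive utility.

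\medskip

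\textbf{Key steps.} First I would localize: suppose for contradiction that on some compact $I=[0,T]$ the singular part $c^{sing}$ of $c$ has positive total mass $m:=|c^{sing}|(I)>0$. Since $c=c^{ac}\,dt+c^{sing}$ and the objective only sees $c^{ac}$, the singular part affects optimality purely through the state constraint $k'+c\le f(k,K)$. The idea is to define a modified control $\tilde c$ obtained by deleting (or reducing) $c^{sing}$ and redistributing a comparable amount of mass as an $L^1$ density, then tracking the effect on the admissible state. Concretely, removing the singular part from $c$ makes the constraint \emph{slacker} (the left side $k'+c$ decreases on $I$), so one can afford to both raise $k$ and add a positive absolutely continuous consumption $\eta\,\ind_I$; by $U'(0)=\infty$, even an infinitesimal such $\eta$ produces a first-order gain $\int_0^T e^{-rt}U(\eta)\,dt>0$ in $J_\infty$, whereas the discarded singular mass contributed $0$. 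Second, I must ensure the modified pair $(\tilde k,\tilde c)$ remains in $\AK$: since $\tilde k$ can be taken slightly larger than $k$ and $f(k,K)$ is monotone-controlled via Assumption \ref{f-ASSUMP}(iii) ($f_k\ge -\delta$), the perturbation of $f(k,K)$ is controlled, and for small enough redistribution the constraint is preserved on $I$ and $k$ is unchanged outside $I$ (one returns $\tilde k$ to $k$ at the right endpoint, which is feasible because the removed singular mass gives a budget surplus). Third, I would invoke Lemma \ref{L2}(i)–(ii) to guarantee that the gain on the finite window $I$ genuinely increases the \emph{full} functional $J_\infty$, so that $(\tilde k,\tilde c)$ strictly beats $(k,c)$, contradicting optimality.

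\medskip

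\textbf{Main obstacle.} The delicate point is the bookkeeping on the state constraint: redistributing singular mass changes $k$ on the interval, which in turn changes $f(k,K)$ and hence the available budget, and I must close the trajectory back to its original value at $t=T$ so that nothing downstream is disturbed. The key technical lever is that deleting a positive singular mass strictly relaxes the budget (the constraint is an \emph{inequality} $k'+c\le f(k,K)$, not an equality), so there is genuine slack to exploit; combined with $f_k\ge-\delta$ giving a two-sided Gr\"onwall-type control on how the state perturbation propagates, the redistributed absolutely continuous consumption can be made small enough to keep feasibility while still earning strictly positive utility thanks to $U'(0)=\infty$. I expect verifying feasibility of $\tilde k\ge 0$ together with $\tilde k'+\tilde c\le f(\tilde k,K)$ on all of $I$ — with the constraint holding in the sense of measures — to be the part requiring the most care.
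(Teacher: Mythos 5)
Your overall strategy is the same as the paper's (argue by contradiction; on a compact interval replace $(k,c)$ by a competitor that converts the singular mass of $c$ into absolutely continuous consumption, leaving everything outside the interval untouched; conclude from strict monotonicity of $U$, since $c^{sing}$ contributes nothing to $J_\infty$), but two of your key steps do not hold up. The first is the feasibility mechanism. The slack created by deleting $c^{sing}$ is itself singular: the measure constraint $k'+c\le f(k,K)\,dt$ splits into $(k')^{ac}+c^{ac}\le f(k,K)$ a.e.\ and $(k')^{sing}+c^{sing}\le 0$, so the freed budget lives on a Lebesgue-null set and cannot directly fund a density $\eta\ind_I$; you must modify $k$, and then absorb the discrepancy $f(\tilde k,K)-f(k,K)$. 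You propose to absorb it by a ``two-sided Gr\"onwall-type control'' from $f_k\ge-\delta$, but that bound is genuinely one-sided: it controls the output loss only when $\tilde k\ge k$ (when $\tilde k<k$ there is no control, since $f_k$ may blow up as $k\to 0$, as in Cobb--Douglas). And your construction cannot guarantee $\tilde k\ge k$: if the singular mass of $c$ sits near the right endpoint of $I$, then adding consumption $\eta$ from the start of $I$ while returning $\tilde k$ to $k$ at the right endpoint forces $\tilde k<k$ on an initial portion of $I$. The paper sidesteps this issue entirely: it takes $\tilde k$ \emph{affine} on $[a,b]$ and uses only a uniform bound $M$ on $|f|$ over the compact range of values of $(k,K)$, so no comparison between $\tilde k$ and $k$ is needed, at the price of a constant correction $M$ subtracted from the averaged consumption.

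The second gap is quantitative. Any such correction (yours, of order $\delta\|\tilde k-k\|_\infty$ per unit time; the paper's constant $M$) costs an amount proportional to $|I|$, while the gain is the captured singular mass $c^{sing}(I)$; the contradiction only materialises on an interval where the mass dominates the correction \emph{and} the pre-existing absolutely continuous consumption (the latter enters through Jensen's inequality when you average). The paper produces such an interval by a measure-theoretic observation you have no substitute for: if $c^{sing}([a,b])\le M(b-a)+c^{ac}([a,b])$ for \emph{every} interval, then $c^{sing}\le (M+c^{ac})\,dt$ as measures, so $c^{sing}$ would be absolutely continuous, hence zero. Your ``for small enough redistribution'' does not replace this: shrinking $I$ shrinks the correction but may shrink the captured singular mass just as fast. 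Two minor points: $U$ is not assumed strictly concave and $U'(0)=\infty$ is not needed here (the paper uses only strict monotonicity of $U$, via Jensen for the concave part); and invoking Lemma \ref{L2} is superfluous, because the competitor coincides with $(k,c)$ outside a compact interval, so the comparison of $J_\infty$ follows by additivity of the integral (its finiteness on $\AK$ being guaranteed by the bounds of Lemma \ref{L1}).
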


\begin{proof} Let us recall that, in general, any $c$ such that $(k,c) \in \AK$ consists of an absolutely continuous and a singular part with respect to Lebesgue measure $c=c^{ac} + c^{sing}$. 

 For any Borel function $h$ we will use the notation $\avint_{a}^{b} h(s) ds =\frac{1}{b-a}\int_{a}^{b} h(s) ds$ (if $a \ne b$, otherwise this is equal to $h(a)$).

Suppose $(k,c)$ is optimal. Given an interval $I=[a,b]\subset\R_+$, we build $(\tilde k, \tilde c)$ a new competitor in the following way: $\tilde k$ is affine on $[a,b]$, connecting the values $k(a^-)$ to $k(b^+)$, and coincides with $k$ outside $[a,b]$; $\tilde c$ is absolutely continuous and constant on $[a,b]$. We have to choose the constant value for $\tilde c$ for the inequality $\tilde k'+\tilde c\leq f(\tilde k(t),t)$ to be satisfied. Notice that on $[a,b]$ we have $\tilde k'=k'([a,b])/(b-a)$. If we set $\hat c=c([a,b])/(b-a)$ for sure we have $\tilde k'+\hat c\leq \avint_a^b f(k(s),K(s))ds$. Since we can assume $f$ to be bounded ($k$ only takes values in a bounded set, and $\tilde k$ as well, hence), we have $\avint_a^b f(k(s),K(s))ds\leq f(\tilde k(t),t)+M$. Hence, it is sufficient to set $\tilde c=\hat c-M$ to have something admissible, provided $\hat c\geq M$.

Now, suppose $c$ is not absolutely continuous. Then for sure there exists $I=[a,b]$ such that $c^{sing}([a,b])> M(b-a)+c^{ac}([a,b])$ (otherwise $c^{sing}\leq M+c^{ac}$ and $c^{sing}$ would be absolutely continuous). The interval $[a,b]$ may be taken as small as we want which means that we can assume $(b-a)e^{-ra}\leq 2m_{a,b}$, where we set $m_{a,b}:=\int_a^b e^{-rs}ds$. 
We choose such an interval and we build $\tilde k$ and $\tilde c$ as above. We want to prove that $(\tilde{k},\tilde{c})$ is a better competitor.  Let us set $X=c^{ac}([b-a])/(b-a)$ and $Y=c^{s}([b-a])/(b-a)$. 

The optimality of $(k,c)$ implies
$$\int_a^b e^{-rt}U(c^{ac}(t))dt\geq \int_a^b e^{-rt}U(\tilde c(t))dt.$$
Yet, we have, using Jensen's inequality,
$$ \int_a^b e^{-rt}U(c^{ac}(t))\leq m_{a,b} U\left(\frac{1}{m_{a,b}}\int_a^b c^{ac}(t)e^{-rt}dt\right)\leq m_{a,b} U\left(\frac{e^{-ra}}{m_{a,b}}\int_a^b c^{ac}(t)dt\right)\leq  m_{a,b}U(2X)$$
and 
$$\int_a^b e^{-rt}U(\tilde c(t))dt=m_{a,b} U(X+Y-M).$$
We deduce 
$$U(X+Y-M)\leq U(2X),$$
but the interval $[a,b]$ was chosen so that $Y>X+M$, which gives a contradiction because of the strict monotonicity of $U$.\end{proof}

We can use the above result to prove some useful properties of our maximization problem.

\begin{corollary}\label{uniqamongall}
For any optimiser $(k,c)$, we have $k \in W^{1,1}_{loc}(\R_{+})$ and we have saturation of the constraint $k'+c=f(k,K)$. Moreover, we also have uniqueness of the optimal pair $(k,c)$. 
\end{corollary}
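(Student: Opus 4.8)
The plan is to deduce all three conclusions from the single fact that the feasibility constraint \eqref{4} must hold with equality as measures. Set $\mu := f(k,K) - k' - c$, which is a nonnegative (locally finite) measure by admissibility, and decompose it into its absolutely continuous and singular parts $\mu = \mu^{ac} + \mu^{sing}$. Since $k$ is bounded and $K$ is bounded, $f(k,K)$ is a bounded density, hence an absolutely continuous measure; together with $c \in L^1_{loc}(\Rp)$ from Lemma \ref{cisac} this gives $\mu^{ac} = f(k,K) - c - (k')^{ac}$ and $\mu^{sing} = -(k')^{sing}$, so that proving $\mu = 0$ is equivalent to proving both $(k')^{ac} + c = f(k,K)$ a.e. and $(k')^{sing} = 0$. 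Once saturation is known, $k' = f(k,K) - c$ has a right-hand side in $L^1_{loc}(\Rp)$ (the first term bounded, the second in $L^1_{loc}$ by Lemma \ref{cisac}), whence $k \in W^{1,1}_{loc}(\Rp)$; thus the first two assertions reduce to $\mu = 0$.

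To kill $\mu^{ac}$ I would argue directly: if $\mu^{ac} > 0$ on a set of positive Lebesgue measure, replace $c$ by $c + \mu^{ac}$ while keeping $k$ unchanged. The pair stays admissible, since $k' + (c+\mu^{ac}) = f(k,K) - \mu^{sing} \le f(k,K)$, and its objective is strictly larger because $U$ is strictly increasing and the absolutely continuous part of the control has been strictly raised, contradicting optimality. To kill $\mu^{sing}$ I would reuse the interval competitor of Lemma \ref{cisac}: on a short interval $[a,b]$ replace $k$ by the affine interpolant $\tilde k$ between $k(a^-)$ and $k(b^+)$ and take $\tilde c$ constant. As in that lemma, feasibility allows the constant level $\tilde c = \avint_a^b (c + \mu) - M$ for a fixed constant $M$ controlling the oscillation of $f$, so that the new control exceeds the old average by the full singular slack $\mu^{sing}([a,b])/(b-a)$. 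Comparing with the Jensen upper bound $m_{a,b} U(2X)$ for the old utility on $[a,b]$ (where $X = \avint_a^b c^{ac}$), a strict gain follows as soon as $\mu^{sing}([a,b])/(b-a)$ is large relative to $X$. The main obstacle is precisely to produce such intervals: I would invoke the Lebesgue--Besicovitch differentiation theorem, which guarantees that at $\mu^{sing}$-almost every point the ratio $\mu^{sing}([a,b])/(b-a)$ blows up while $\avint_a^b c^{ac}$ stays negligible in comparison; choosing $[a,b]$ around such a point (and small enough that $(b-a)e^{-ra} \le 2 m_{a,b}$) yields the contradiction and forces $\mu^{sing}=0$.

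For uniqueness I would exploit strict concavity of $f$ in $k$ (point $(vi)$ of Assumption \ref{f-ASSUMP}) rather than any strict concavity of $U$. First, $\AK$ is convex: for $(k_1,c_1),(k_2,c_2) \in \AK$ and the midpoint $(\bar k,\bar c) = \tfrac12(k_1+k_2,c_1+c_2)$ one has $\bar k' + \bar c = \tfrac12[(k_1'+c_1)+(k_2'+c_2)] \le \tfrac12[f(k_1,K)+f(k_2,K)] \le f(\bar k,K)$ by concavity of $f(\cdot,K)$, and the initial and sign constraints pass to the midpoint. Suppose now $(k_1,c_1)$ and $(k_2,c_2)$ are both optimal with value $V$. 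Since $U$ is concave, $J_\infty$ is concave, so $J_\infty(\bar c) \ge \tfrac12 J_\infty(c_1) + \tfrac12 J_\infty(c_2) = V$; as $(\bar k,\bar c)$ is admissible this forces $J_\infty(\bar c) = V$, i.e. $(\bar k,\bar c)$ is itself optimal. By the saturation already proven, the midpoint saturates, $\bar k' + \bar c = f(\bar k,K)$, while saturation of the two optimizers gives $\bar k' + \bar c = \tfrac12[f(k_1,K)+f(k_2,K)]$. Hence $\tfrac12[f(k_1,K)+f(k_2,K)] = f(\bar k,K)$ a.e., and strict concavity of $f(\cdot,K)$ forces $k_1 = k_2$ a.e.; as both lie in $W^{1,1}_{loc}(\Rp)$ with the same initial value they coincide, and then $c_1 = f(k_1,K) - k_1' = f(k_2,K) - k_2' = c_2$ from saturation. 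This completes the proof.
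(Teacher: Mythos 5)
Your proposal is correct, and its first two parts take a genuinely different route from the paper's, while the uniqueness part coincides with it. You encode both regularity and saturation in the single slack measure $\mu=f(k,K)-k'-c\ge 0$ and kill its two parts separately: the absolutely continuous part by adding it to $c$ (admissible precisely because $\mu^{sing}\ge0$, strictly better since $U$ is strictly increasing), and the singular part $\mu^{sing}=-(k')^{sing}$ by re-running the interval-competitor construction from Lemma \ref{cisac} on intervals where the singular mass dominates $\int_a^b(c^{ac}+M)$. The paper instead disposes of the singular part in one line: since $J_\infty$ only sees $c^{ac}$, the pair $\bigl(k,\,c+((k')^{sing})_-\bigr)$ is admissible (the added singular measure exactly cancels the negative singular part of $k'$ in the constraint, whose a.c.\ part is untouched) and has the \emph{same} objective value, hence it is another optimiser whose consumption is not absolutely continuous --- contradicting Lemma \ref{cisac} directly; saturation then follows by replacing $c$ with $f(k,K)-k'$. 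So the paper reuses Lemma \ref{cisac} as a black box exactly where you re-prove its quantitative core; your version is more self-contained and makes the ``slack $=0$'' structure explicit (it would even yield saturation of the a.c.\ parts without knowing $(k')^{sing}=0$ first), but it is heavier, and you could have applied the same transfer trick to $\mu^{sing}$ itself: $(k,c+\mu^{sing})$ is admissible with unchanged objective, and Lemma \ref{cisac} finishes. One point to state carefully in your route: at a $\mu^{sing}$-generic point the averages $\avint_a^b c^{ac}$ may themselves blow up, so what you need is the Besicovitch differentiation of one Radon measure against another (the ratio $\int_{I}c^{ac}\,dt\,/\,\mu^{sing}(I)\to 0$ at $\mu^{sing}$-a.e.\ point, valid since the two measures are mutually singular), which is indeed what your phrase ``negligible in comparison'' asserts; alternatively the covering argument already used in Lemma \ref{cisac} (``otherwise $\mu^{sing}\le (c^{ac}+M)\,dt$ would be absolutely continuous'') avoids differentiation theory altogether. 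The uniqueness argument --- midpoint competitor, concavity of $U$ and of $f(\cdot,K)$ for admissibility and optimality, then strict concavity of $f(\cdot,K)$ contradicting saturation of the midpoint unless $k_1=k_2$, and finally $c_1=c_2$ from the saturated equation --- is exactly the paper's proof.
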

\begin{proof}
First, note that, if $k \notin W^{1,1}_{loc}(\R_{+})$, then this means that $k'$ has a negative singular part $((k')^{sing})_-$. Yet the inequality $k'+c\leq f(k,K)$ stays true if we replace $c$ with $c+((k')^{sing})_-$, which would be another optimiser. But this contradicts Lemma \ref{cisac}, since every optimiser $c$ should be absolutely continuous. 

Then, as soon as we know that both $k'$ and $c$ are absolutely continuous, it is clear that we must have $k'+c= f(k,K)$. Otherwise, we can replace $c$ with $f(k,K)-k'$ and get a better result (we needed to prove the absolute continuity of $k'$ as adding something to the singular part of $c$ does not improve the functional). 

In what concerns uniqueness, we first stress that the functional we maximise is not strictly concave, for two reasons: on the one hand we did not assume $U$ to be strictly concave, and on the other hand, anyway, there would be an issue as far as the singular part of $c$ is involved. Yet, now that we know that optimisers are absolutely continuous and saturate the differential inequality constraint, uniqueness follows from the strict concavity of $f$.

Indeed, suppose that that $(k_1,c_1)$ and $(k_2,c_2)$ are two minimisers. Then, setting $c_3=(c_1+c_2)/2$ and $k_3=(k_1+k_2)/2$, the pair $(k_3,c_3)$ is also admissible (since $f$ is concave in $k$) and optimal (since the cost is concave in $c$). Yet, strict concavity of $f$ implies that $(k_3,c_3)$ does not saturate the constraint, which is a contradiction with the first part of the statement, unless $k_1=k_2$. As a consequence, we obtain uniqueness of $k$ in the optimal pairs $(k,c)$. But this, together with the relation $k'+c= f(k,K)$, which is now saturated, also provides uniqueness of $c$.
\end{proof}

Now comes the most technical part of this section. We need to establish uniform lower bounds on the optimal $c$, which will be useful for approximation issues in the next section (roughly speaking: we need to compensate small perturbations in $f(k,K)$ by adding or subtracting small constants to $c$, but this could violate the positivity constraint $c\geq 0$ unless we have $c>0$, with uniform bounds). This requires to use the optimality conditions, in the form of a suitable version of the Pontryagin maximum principle, which should be adapted to our case: infinite horizon, and both state and control constraints. A recent version of the PMP, proved in \cite{de2009optimality}, exactly fits our needs, except the fact that it assumes all data to be smooth functions. This is in contradiction with our assumptions, which include $U'(0)=+\infty$; this difficulty will be handled  by approximation.

\begin{proposition}\label{lowerbound on c}
If $k_0>0$, then the optimal consumption $c$   is bounded from above and below on every bounded interval, i.e., for every $T>0$ there exist two constants $0<c_{-}(T)<c^+(T)<\infty$, depending only on $f, U, k_0$ and $T$, such that $c(t) \in [ c_{-}(T), c_{+}(T)]$ for every $t \in[0,T]$.
\end{proposition}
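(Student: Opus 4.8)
The plan is to derive the Pontryagin optimality conditions and to read the two bounds on $c$ off the size of the adjoint (current-value costate) variable, exploiting that a finite initial capital $k_0>0$ forces a finite marginal value of capital. Since the only obstruction to applying the maximum principle of \cite{de2009optimality} (infinite horizon, state and control constraints) is $U'(0)=+\infty$, I would first regularise, replacing $U$ by smooth, strictly increasing, strictly concave $U_n\ge 0$ with $U_n=U$ on $[1/n,\infty)$, $U_n\uparrow U$ and $U_n'(0)<\infty$. For each $n$ the principle yields a nonnegative costate $p_n$ with the maximisation relation $U_n'(c_n)=p_n$ on $\{c_n>0\}$, the adjoint dynamics $\dot p_n=(r-f_k(k_n,K))\,p_n$ off the state constraint (the multiplier of $k\ge 0$ being carried by $\{k_n=0\}$), and the envelope identity $p_n(0)\in\partial V_n(k_0)$, where $V_n$ is the value of the regularised problem as a function of the initial capital. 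All constants below will be uniform in $n$ and in $K$ ranging over functions valued in $\bar I$, and the conclusion for the original $c$ then follows by passing to the limit, using the uniqueness of the optimiser from Corollary~\ref{uniqamongall}.

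The lower bound on $c$ is obtained first, and crucially needs no lower bound on $k$. Because $V_n$ is concave and finite near $k_0>0$, a left chord gives $p_n(0)\le\sup\partial V_n(k_0)\le\tfrac{V_n(k_0)-V_n(k_0/2)}{k_0/2}\le 2B/k_0=:P_0$, where $B<\infty$ is a uniform upper bound for the value (from $U(c)\le C_3+C_4c$ and the resource estimate of Lemma~\ref{L1}) and $V_n(k_0/2)\ge 0$ via the feasible zero-consumption path. Using only $f_k\ge-\delta$ from Assumption~\ref{f-ASSUMP}(iii), the adjoint dynamics integrate forward to $p_n(t)\le P_0\,e^{(r+\delta)t}$ on $[0,T]$, whence $c_n=(U_n')^{-1}(p_n)\ge(U_n')^{-1}\!\big(P_0e^{(r+\delta)T}\big)$; for $n$ large the right-hand side equals the fixed positive number $c_-(T):=(U')^{-1}\!\big(P_0e^{(r+\delta)T}\big)$, so in the limit $c\ge c_-(T)>0$ on $[0,T]$.

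I would then upgrade this to a lower bound on $k$ by a barrier argument using $k\ge 0$, and deduce the upper bound on $c$. Fix $\eta(T)>0$ so small that $f(k,K)\le\tfrac12 c_-(2T)$ whenever $k\le\eta(T)$ and $K\in\bar I$ (possible since $f(0,\cdot)=0$ and $f$ is continuous). If $k(t_0)<\eta(T)$ for some $t_0\in[0,T]$, then as long as $k<\eta(T)$ one has $k'=f(k,K)-c\le-\tfrac12 c_-(2T)<0$, so $k$ strictly decreases and reaches $0$ within $[0,2T]$, where $f(0,K)=0$ forces $c=0$, contradicting $c\ge c_-(2T)>0$; hence $k\ge\eta(T)$ on $[0,T]$. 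On $\{k\ge\eta(T)\}\times\bar I$ the derivative $f_k$ is bounded above by Assumption~\ref{f-ASSUMP}(iii) and below by $-\delta$, so $|r-f_k|\le L(T)$ along the trajectory and the adjoint is two-sided comparable, $\inf_{[0,T]}p_n\ge e^{-L(T)T}\,\overline p_n$ with $\overline p_n:=\sup_{[0,T]}p_n$. Were $\overline p_n$ small, $c_n\ge(U_n')^{-1}(\overline p_n)$ would be large everywhere and $T\,(U_n')^{-1}(\overline p_n)\le\int_0^T c_n\le\bar C_0+\bar C_1 T$ by Lemma~\ref{L1} would force $\overline p_n\ge U_n'(\bar C_1+\bar C_0/T)$, equal to the fixed positive number $U'(\bar C_1+\bar C_0/T)$ for $n$ large. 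Thus $\inf_{[0,T]}p_n\ge e^{-L(T)T}U'(\bar C_1+\bar C_0/T)=:\underline p>0$, and $c_n\le(U_n')^{-1}(\underline p)\to c_+(T):=(U')^{-1}(\underline p)<\infty$.

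I expect the main obstacle to be making the maximum principle rigorous in this nonsmooth, infinite-horizon, doubly constrained setting — in particular justifying the envelope relation $p_n(0)\in\partial V_n(k_0)$ and the vanishing of the state-constraint multiplier once $k>0$ is known — and, above all, checking that every constant ($P_0$, $B$, $L(T)$, $\eta(T)$, and the resource constants of Lemma~\ref{L1}) is genuinely uniform in the regularisation index $n$ and in the parameter $K$, so that the bounds survive the passage to the limit $n\to\infty$ and depend only on $f,U,k_0,T$ as claimed.
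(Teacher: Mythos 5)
Your plan reproduces the paper's overall strategy (regularise, apply the Pontryagin principle of \cite{de2009optimality}, read bounds on $c$ off bounds on the costate, pass to the limit), and your upper-bound argument is essentially sound, but the step on which everything else rests --- the envelope relation $p_n(0)\in\partial V_n(k_0)$ --- is a genuine gap, not a technicality. That relation is \emph{not} part of the cited maximum principle, which only supplies a tuple $(p,q,\lambda,\theta,\mu)$ with the adjoint dynamics, the maximisation condition, the identity $\lambda e^{-rt}U^{(h)}(c^{(h)})+\theta\,(f^{(h)}(k^{(h)},K)-c^{(h)})=-q$ and $\lim_{t\to\infty}q(t)=0$, and the nontriviality condition $(p,q,\lambda,\theta)\neq 0$. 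To deduce $p_n(0)\in\partial V_n(k_0)$ by the usual concavity/supporting-hyperplane computation you need two things that are precisely the hard content here: (i) \emph{normality}, $\lambda>0$ --- your relation $U_n'(c_n)=p_n$ silently normalises $\lambda=1$, whereas the paper spends a full page excluding $\lambda=0$; and (ii) a \emph{transversality condition at infinity} of the form $\liminf_{T\to\infty}\theta(T)\,k(T)= 0$ along comparison trajectories, needed to discard the boundary term when integrating the concavity inequality over $[0,T]$ and letting $T\to\infty$. The condition $\lim q(t)=0$ provided by the cited theorem controls $\theta\,(k^{(h)})'$, not $\theta\, k^{(h)}$, and infinite-horizon problems are exactly the setting where such sensitivity relations can fail (Halkin-type phenomena), all the more so with the state constraint $k\ge 0$ active in principle. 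Without (i)--(ii) there is no a priori upper bound on $p_n(0)$ at all: the maximum principle by itself is compatible with $p_n(0)$ huge, i.e.\ with tiny initial consumption, which is the very scenario to be excluded. The paper excludes it by a completely different route: a case analysis on the hitting time $T_1=\inf\{t: k^{(h)}(t)<k_2\}$, a bound on $\theta(T_1)$ extracted from the binding dynamics at $T_1$, and, in the case $T_1\ge T_0$, a value-comparison argument showing that uniformly small consumption on $[0,T_0]$ would drive the total value down to that of the zero-consumption path, contradicting optimality --- this last step requiring the convergence $\max J^{(h)}_\infty\to\max J_\infty$ of Lemma \ref{ch}. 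If your envelope relation could be established, your proof would indeed be shorter and more elegant (the chord bound $p_n(0)\le 2B/k_0$ plus one-sided Gronwall is very clean, and it even yields the lower bound before, and independently of, any positivity of $k$); but as written it assumes the conclusion of the hard step rather than proving it.

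Two secondary points. First, you regularise only $U$, but the cited principle requires smooth data and Assumption \ref{f-ASSUMP} only makes $f$ of class $C^1$ on the open quadrant: for Cobb--Douglas, $f_k\to+\infty$ as $k\to 0^+$, so your adjoint equation $\dot p_n=(r-f_k(k_n,K))p_n$ is not even defined where $k_n$ might touch $0$ --- and at the stage where you derive the lower bound you do not yet know $k_n>0$. This is why the paper also replaces $f$ by $f^{(h)}$ with $f^{(h)}\ge f$, $f^{(h)}_k$ bounded (this is used both to apply the theorem and in the argument excluding $\lambda=0$). Second, the final passage $n\to\infty$ ("using the uniqueness of the optimiser from Corollary \ref{uniqamongall}") needs a convergence-of-optimisers statement analogous to Lemma \ref{ch}; with your regularisation ($U_n\uparrow U$, $U_n=U$ on $[1/n,\infty)$, $f$ untouched) the analogue does hold by the same monotone-convergence argument, so this gap is fillable, but it must be stated and checked rather than invoked.
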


\begin{proof}  We first approximate $f$ by a sequence of functions $\{\fe\}$ such that $\fe \in C^{1}$ for every $h >0$ with $\fe \to f$ as $h \to 0$. Similarly for the utility function $U$ which is approximated by a sequence $\Ue \in C^1$.  We then consider Problem \eqref{3} with $f$ replaced by $\fe$ and $U$ replaced by $\Ue$. We will choose $\fe$ converging uniformly to $f$, with $\fe\geq f$, and take a specific form of $\Ue$: $\Ue(c):=U(c+h)$. We then invoke the characterization of the optimal path for this problem using the Pontryagin principle proposed in  \cite{de2009optimality}, for a fixed $h>0$.  

Fix a function $K$ and let $(\ke,\ce)$ be the corresponding optimal path for \eqref{3}, with $f$ replaced by $\fe$ and $U$ replaced by $\Ue$. Using Theorem 1 in \cite{de2009optimality}, there exists a nonnegative scalar $\lambda$,  a scalar $A \in \R$, an absolutely continuous function $p : \R_{+} \to \R$, a bounded variation function $q : \R_{+} \to \R$ and a nonnegative regular Borel measure $\mu$ defined on $\R_{+}$ such that
\begin{eqnarray}
&&p'(t)=-\theta(t) \, \fe_{k}(\ke(t),K(t)), \,\,\, p(0)=A,  \label{10-A}\\
&&q'(t)=r \lambda e^{-r t} \Ue(\ce(t)), \,\,\, \lim_{t \to \infty} q(t)=0,  \label{10-B}\\
&& \ce(t)\in \arg\max_{c_1 \ge 0} \{ \lambda e^{-r t} \Ue(c_1) + \theta(t) (\fe(\ke(t),K(t))-c_1) \} \,\, a.e. \,\, t\label{10-C} \\
&&\lambda e^{-r t} \Ue(\ce(t)) + \theta(t) (\fe(\ke(t),K(t))-\ce(t))=-q(t), \label{10-D} \\
&&(p,q,\lambda,\theta) \ne 0,  \label{10-E}
\end{eqnarray}
where $supp(\mu) \subset \{ t \,\, : \,\, \ke(t)=0\}$ and
\begin{eqnarray}\label{6-5-2016-B}
\theta'(t)=p'(t)-\mu(t).
\end{eqnarray}
Combining \eqref{10-A} with \eqref{6-5-2016-B} we obtain,
\begin{eqnarray}
\theta'(t)=-\theta(t) \, \fe_{k}(\ke(t),K(t)) -\mu.  \label{13-A}
\end{eqnarray}

We claim that $\lambda >0$. Assume per contra that $\lambda=0$. Then, \eqref{10-B} along with the boundary condition implies that $q(t)=0$ for all $t$. Also, \eqref{10-C} yields 
\begin{eqnarray*}
\max_{c_1 \ge 0} \{  \theta(t) \big(\fe(\ke(t),K(t))-c_1\big) \} =  \theta(t) (\fe(k(t),K(t))-\ce(t)),
\end{eqnarray*}
which in turn implies that $\max_{c_1 \ge 0} \{  - c_1 \theta(t)\}=-\theta(t) \ce(t)$, so that
\begin{eqnarray}\label{13-E}
\theta(t) \ce(t)=0, \,\,\, \theta(t) \ge 0, \,\,\, a.e \,\, t \in \R_{+}.
\end{eqnarray}
 
Suppose $\theta(0)=0$. In this case, from the equation $\theta'=p'-\mu=-\theta \fe_k -\mu\leq \delta\theta$, we get that $\theta(t)=0$ for every $t$. Then we can get $p'=0$, hence $p=A$. Looking at $\theta=p-\mu([0,t])$ for small $t$ (where $\mu$ vanishes, since $k\neq 0$), we deduce $A=0$, hence $p=0$. Now $\theta=p-\mu([0,t])$ implies $\mu=0$. Hence in this case we have $(\lambda,p,q,\mu)=0$ which is not possible.

Now suppose $\theta(0)>0$. Let $T_1=\inf\{t\,:\,\ke(t)=0\}$. On $[0,T_1[$, $\ke$ does not vanish, hence we have  $\theta'=-\theta \fe_k $. Since $\fe_k$ is bounded (we have approximated $f$ with $\fe$ on purpose), this equation implies that $\theta$ does not vanish on $[0,T_1[$. But in this case, we get $c=0$ on $[0,T_1[$. Hence, we have $(\ke)'(t)=\fe(\ke(t),K(t))$, and by the uniqueness of the solution of this equation (again, we have approximated, so that $\fe$ is Lipschitz now), it is not possible to have $T_1<+\infty$ and $\ke(T_1)=0$. Hence in this case $c(t)=0$ and $\ke(t)>0$ for every $t$. This last case can be excluded just by observing that zero-comsumption is never optimal as soon as $U$ is increasing and Lipschitz (just modify $c$ on any small interval and then start again using $c=0$).

Hence $\lambda>0$. Up to rescaling, we can suppose $\lambda=1$.

With $\lambda=1$, we now study the behaviour of $\theta$, and in particular we want to prove local upper bounds on $\theta$. Using again \eqref{13-A}, since  $\mu$ is nonnegative, and $\fe_{k}\ge -\delta$ for any $k, K$, we obtain the inequality
\begin{eqnarray}\label{11-C}
\theta'(t)\le \delta \theta(t),
\end{eqnarray}
so that by Gronwall  upper bounds on $\theta$ propagate into the future and lower bounds on $\theta$ propagate into the past. 

By \eqref{10-C}  in order to obtain upper bounds on $c(t)$ it is enough to obtain lower bounds on $\theta$: this is easy because, on the interval $[T,T+1]$, the bound on the mass of $c$ provides the existence of a point $t_0\in [T,T+1]$ where the value $c(t_0)$ can be bounded from above by a constant only depending on $f,U,k_0$ and $T$. This translates into a lower bound on $\theta(t_0)$ and, using \eqref{11-C}, on $\theta(t)$ for $t\in [0,T]$. This implies that $c$ is locally bounded on $\R^+$ and proves the upper bound part of the claim.

For the opposite bound, we just need to establish some upper bounds for $\theta$ on some initial time interval $[0,T_1]$. 

Under the assumption that $k_{0} >0$, and since $\ke \in W^{1,1}(\R_{+})$ is continuous, there exists a time $T_{1}=\inf\{ t \,\, : \,\, \ke(t)< k_2:=\min\{\frac{k_1}{2},\frac{k_0}{2}\}\}>0$, such that $\ke(t) \ge k_2>0$ for $t \in [0,T_1]$ (the value of $k_1$ used to define $k_2$ is the one appearing in \ref{f-ASSUMP}(v)).  Then $\mu(t)=0$ for any $t \in [0,T_1]$. Set 
$$C:=\sup_{k\ge k_2,t,h}  \fe_{k}(k,K(t)) <+\infty$$
(the fact that the sup is finite   relies on the fact that we look at $k$ far from $0$ and that $\fe$   will keep the same properties as $f$, and in particular Assumption   \ref{f-ASSUMP}(iii) ). Combining the above with  \eqref{13-A}, we get 
\begin{eqnarray*}
\theta'(t) \ge -C \theta(t), \,\,\, t \in [0,T_1].
\end{eqnarray*}
Hence, we conclude that   $\theta(t) e^{C t}$ is an increasing function of $t$ for $t \in [0,T_1]$.  Furthermore, by \eqref{11-C} we also have that $\theta(t) e^{-\delta t}$ is a decreasing function of $t$ for any $t \in \R_{+}$.   Consider now $t=T_1$. At this time instance $k$ decreases below the level $k_2$, so that  there exist points $t_n\to T_1^{-}$ such that $\ked(t_n)\leq 0$, and combining that with the binding dynamic constraint this yields that $f(\ke(t_n),K(t_n))-\ce(t_n) \leq 0$, from which it follows that
\begin{eqnarray*}
0<c_0\le \fe(k(t_n),K(t_n)) \le \ce(t_n),
\end{eqnarray*}
(with $k(t_n)\to k(T_1)=k_2$, $k_2\le k(t)\le k_1$) where we bounded $\fe$ from below by a uniform constant $c_0>0$ using Assumption \ref{f-ASSUMP}(v) ; since we use an approximation $\fe$ with $\fe\ge f$, the value of $c_0$ does not depend on $h$).
% Note that $\ce(t)$ and $(\ke)'(t)$ are not well-defined for every $t$, but this can be translated into the fact there should be a set of positive measure of instants $t$ close to $T_1^-$ where $(\ke)'\leq 0$ (otherwise $\ke$ would be increasing just before $T_1$).

Whenever $\ce$ is bounded from below by a positive constant, this translates into an upper bound on $\theta$. Indeed, \eqref{10-C} provides 
$$\theta(t_n)=e^{-r t_n}\Ued(\ce(t_n)) \le e^{-r t_n} U'(\ce(t_n)),$$
as soon as $\ce(t)>0$. Using the behaviour of $ U'$, and the fact that $t_n\to T_1$, this gives an upper bound on $e^{rT_1}\theta(T_1)$. 

Coupling this last bound with the fact that  $\theta(t) e^{C t}$ is increasing for $t \in [0,T_1]$, this gives a bound on $\theta$:
$$\theta(t)\leq \theta(T_1)e^{C (T_1-t)}.$$
This can be used to bound $\theta$ on an initial interval, but this bound depends on $T_1$.
%
%Hence $\ce$ is bounded below by a positive number at $t=T_1$. \red{$c \in L^{1}_{loc}(\R_{+})$, so we have to be a bit careful when assigning value to it at a particular point, we do not know whether $c$ is continuous}. By the behaviour of $U'$ and $\Ued$ we also have  since 
% $\ce(T_1) \ge c_{*} > 0$ we have that $\Ued(c(T_1)) < C$ where $C$ is some finite constant.

We now need to consider the question of how large $T_1$ actually is. Pick any $\epsilon >0$,  and applying Lemma \ref{L2}  (which obviously holds for the
 problem \eqref{3} with $f$ replaced by $\fe$ and $U$ replaced by $\Ue$ and using the notation $J^{(h)}_{T}$ and $J^{(h)}_{\infty}$ for the finite horizon and infinite horizon value functions respectively)
 there exists $T_0=T(\epsilon) < \infty$ such that $J_{\infty}(c) \le J_{T_0}(c)+\epsilon$. Either $T_1 <T_0$ or $T_1 \ge T_0$.
 
In the first case, $T_1$ is bounded. This is the easy case, as it provides a uniform upper bound on $\theta$. Then, using  
$$\theta(t)\geq e^{-r t}\Ued(\ce(t)),$$
(which is also valid when $\ce=0$), we obtain lower bounds on $\ce$.

We have to look at the second case, suppose $T_1\geq T_0$, and suppose that $\inf_{t\in [0,T_0]} \ce(t)$ is very small, say $\inf_{t\in [0,T_0]} \ce(t)<\epsilon_0$. We will show that this cannot hold if $\epsilon_0$ is small enough. 

In the current case, on the interval $[0,T_0]$, the function $\theta$ is a function which does not vary too much. More precisely, on this interval both $\theta(t) e^{C t}$ in increasing and $\theta(t) e^{-\delta t}$ is decreasing. Moreover, $\ce(t)$ can be deduced from $\theta(t)$. Using \eqref{10-C} we have 
$$\ce(t)=(\Ued)^{-1}(\min\{\theta(t)e^{r t},\Ued(0)\}),$$
which means, thanks to the precise form of $\Ue$
$$\ce(t)=(U')^{-1}(\min\{\theta(t)e^{r t},U'(h)\})-h.$$

Now, suppose $\inf_{t\in [0,T_0]} \ce(t)<\epsilon_0$. Then there exists a $t_0\in [0,T_0]$ such that $\theta(t_0)e^{ -rt_0}>U'(\epsilon_0+h)$. This means that for every $t \in [0,T_0]$ we have $\theta(t)e^{-rt}>U'(\epsilon_0+h)e^{-CT_0}$.  But this implies 
$$\ce(t)<(U')^{-1}(\min\{U'(\epsilon_0+h)e^{-CT_0},U'(h)\})-h$$
for every $t \in [0,T_0]$, and hence 
$$\Ue(\ce(t))<U((U')^{-1}(\min\{U'(\epsilon_0+h)e^{-CT_0},U'(h)\})).$$

This provides 
$$J^{(h)}_{T_0}(\ce)<U((U')^{-1}(\min\{U'(\epsilon_0+h)e^{-CT_0},U'(h)\}))\int_0^T e^{-rt}dt.$$
%Suppose by contradiction that $\epsilon_0$ can be arbitrarily small. 

Our goal is to prove that there exists $\epsilon_0>0$ such that this is not possible, at least for small $h$. If this is not the case, then we can actually pass to the limit in the above inequality with $\epsilon_0,h\to 0$.
Note that 
$$U((U')^{-1}(\min\{U'(\epsilon_0+h)e^{-CT_0},U'(h)\}))=\max\{U(h),U((U')^{-1}(U'(\epsilon_0+h)e^{-CT_0}))\}\to U(0)$$ 
as $\epsilon_0, h\to 0$ (we need to use $U'(0)=+\infty)$).

  We claim the convergence of $J^{(h)}_\infty(\ce)=\max J^{(h)}_\infty$ to $\max J_\infty$, (for the proof see Lemma \ref{ch}). But this gives 
$$\max J_\infty=\lim_{h\to 0} J^{(h)}_\infty(\ce)\leq \liminf_{h\to 0}J^{(h)}_{T_0}(\ce)+\epsilon\leq \epsilon + U(0)\int_0^T e^{-rt}dt\leq \epsilon + U(0)\int_0^\infty e^{-rt}dt,$$
which is a contradiction, for small $\ve>0$, because (just by using the non-optimality of the zero-consumption scenario $c=0$), we always have $\max J_\infty>U(0)\int_0^\infty e^{-rt}dt$. % For the convergence of $J^{(h)}_\infty(\ce)=\max J^{(h)}_\infty$ to $\max J_\infty$, see Lemma \ref{ch}.
Hence, on the interval $[0,T_0]$ we can conclude that we have a uniform lower bound on $\ce(t)$, which will be different from $0$, and this will turn into an upper bound for $\theta(t)$. We already observed that this uppper bounds propagates to the future, and implies a lower bound on $\ce$ on $[0,T]$. 

We have therefore concluded that for every $T$ there exists a value $c_{-}(T)$ such that for any $h>0$ the optimal path $\ce$ of the regularised problem has the property $\ce(t)  \ge c_{-} (T)>0$ for all $t \in[0,T_0]$. We now pass to the limit as $h \to 0$ and conclude that the desired property holds for the original problem as well. This can be done using the following Lemma \ref{ch}.\end{proof}

\begin{lemma}\label{ch}
With our assumptions $\fe\to f$, $\fe\ge f$, $\Ue(c)=U(c+h)$, we have
$$\max J_\infty=\lim_{h\to 0} \max J^{(h)}_\infty$$
and $(\ke,\ce)\deb (k,c)$ (in the sense of the convergence that we defined for pairs $(k,c)$, where $(\ke,\ce)$ and $ (k,c)$ are the optimisers of $J_\infty$ and $J^{(h)}_\infty$, respectively. \end{lemma}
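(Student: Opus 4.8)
The plan is to prove the convergence of the maximal values by two one-sided inequalities and then to upgrade this to convergence of the optimisers via the uniqueness in Corollary \ref{uniqamongall}. Throughout I write $J^{(h)}_\infty,J^{(h)}_T$ for the functionals of the regularised problem (recall $\Ue(c)=U(c+h)$) and $J_\infty,J_T$ for the original ones, and I denote by $\mathcal A^{(h)}(K)$ the admissible set associated with the dynamics $\fe$. The lower bound $\liminf_{h\to0}\max J^{(h)}_\infty\ge\max J_\infty$ is immediate: since $\fe\ge f$, every $(k,c)\in\AK$ satisfies $k'+c\le f(k,K)\le\fe(k,K)$, so $\AK\subset\mathcal A^{(h)}(K)$, and $\Ue(c)=U(c+h)\ge U(c)$ because $U$ is increasing. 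Evaluating the regularised functional at the original optimiser $(k,c)$ then gives $\max J^{(h)}_\infty\ge J^{(h)}_\infty(c)\ge J_\infty(c)=\max J_\infty$ for every $h$.

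The reverse inequality is the substantial part. First I observe that the a priori bounds of Lemma \ref{L1} may be taken uniform in $h$: since $\fe\to f$ uniformly and $\fe$ retains the structural properties of $f$ (in particular Assumption \ref{f-ASSUMP}(iii)--(iv)), the upper bound $C_1$ on $\fe(\ke,K)$ and the a priori ceiling on $\ke$ can be chosen independent of $h$ for small $h$. Hence $\ke$ is bounded in $\BVl\cap\Linl$ and $\ce$ in $\Ml$ uniformly, and a diagonal extraction yields a subsequence with $\ke\to k$ a.e., $\ked\ws k'$ and $\ce\ws c$ on every compact interval (the a.e. convergence following from $\ke(0)=k_0$ exactly as in the existence proof). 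To see that $(k,c)$ is admissible for the original problem, I pass to the limit in $\ked+\ce\le\fe(\ke,K)$ tested against an arbitrary nonnegative continuous $\varphi$ with compact support: the left-hand side converges to $\int\varphi\,d(k'+c)$, while $\fe(\ke,K)\to f(k,K)$ a.e. (by $\fe\to f$ uniform, continuity of $f$ and $\ke\to k$) and is dominated from above by $C_1$, so the reverse Fatou lemma gives $\limsup_h\int\varphi\,\fe(\ke,K)\,dt\le\int\varphi\, f(k,K)\,dt$. This yields $k'+c\le f(k,K)$, i.e. $(k,c)\in\AK$.

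It remains to prove $\limsup_{h\to0}J^{(h)}_\infty(\ce)\le J_\infty(c)$. Given $\epsilon>0$, the tail estimate of Lemma \ref{L2}(i) — valid uniformly in $h$ by the uniform constants above — provides $T$ with $J^{(h)}_\infty(\ce)\le J^{(h)}_T(\ce)+\epsilon$. The shift in the utility is removed by concavity: since the increments of a concave function are nonincreasing, $U(x+h)-U(x)\le U(h)-U(0)$ for all $x\ge0$, whence $J^{(h)}_T(\ce)\le J_T(\ce)+(U(h)-U(0))\tfrac{1-e^{-rT}}{r}$, and the correction vanishes as $h\to0$ by continuity of $U$ at $0$. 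Finally $\limsup_h J_T(\ce)\le J_T(c)$ by the upper semicontinuity of the finite-horizon functional established in the proof of Lemma \ref{L2}(ii), whose zero recession function discards the possible singular part of the weak limit $c$. Letting $h\to0$ and then $\epsilon\to0$ gives $\limsup_h J^{(h)}_\infty(\ce)\le J_\infty(c)\le\max J_\infty$, hence $\limsup_h\max J^{(h)}_\infty\le\max J_\infty$; combined with the lower bound this proves $\lim_h\max J^{(h)}_\infty=\max J_\infty$.

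To conclude, this chain of inequalities forces $J_\infty(c)=\max J_\infty$, so the subsequential limit $(k,c)$ is an optimiser of the original problem, hence \emph{the} optimiser by Corollary \ref{uniqamongall}. Since every subsequence of $(\ke,\ce)$ admits a further subsequence converging to this same limit, the whole family converges, $(\ke,\ce)\deb(k,c)$. The main obstacle is the limsup direction, where two distinct $h$-dependences — the perturbation $\fe$ of the dynamics and the shift $h$ in the utility — must be controlled at once under a weak topology in which mass may both escape to infinity and concentrate into a singular part. The uniform tail estimate neutralises the escape, the zero recession function of $-U$ neutralises the concentration, and the inequality $U(x+h)-U(x)\le U(h)-U(0)$ is the device that cleanly decouples the utility shift.
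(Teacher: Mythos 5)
Your proof is correct, and its overall skeleton coincides with the paper's: extract a weak-* limit of the regularised optimisers from the uniform bounds of Lemma \ref{L1}, pass the constraint to the limit, establish the two one-sided inequalities for the maximal values, and upgrade to convergence of the optimisers via the uniqueness of Corollary \ref{uniqamongall} plus the standard subsequence argument. The two directions, however, are handled by genuinely different devices. For the easy direction the paper evaluates $J^{(h)}_\infty$ at the original optimiser and invokes Fatou's lemma together with the positivity of $U$; you instead use the monotonicity $U(c+h)\ge U(c)$, which yields $J^{(h)}_\infty(c)\ge J_\infty(c)$ pointwise with no limit theorem at all --- slightly cleaner. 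For the substantial (limsup) direction the paper uses a one-line trick you did not spot: since $\Ue(c)=U(c+h)$, one has the identity $J^{(h)}_\infty(\ce)=J_\infty(\ce+h)$, where $\ce+h$ denotes the measure obtained by adding $h$ times the Lebesgue measure to $\ce$; as $\ce+h\deb\tilde c$, the upper semicontinuity of Lemma \ref{L2}(ii) then applies verbatim to the fixed functional $J_\infty$. You instead re-run the two-step structure underlying that lemma: a tail estimate taken uniform in $h$, followed by finite-horizon upper semicontinuity, with the utility shift decoupled by the concavity inequality $U(x+h)-U(x)\le U(h)-U(0)$. Your route is longer, but it has the merit of making explicit the uniformity in $h$ of the constants in Lemmas \ref{L1} and \ref{L2} (and the domination argument for passing the constraint to the limit), points the paper uses only implicitly; the paper's route buys brevity by absorbing the regularisation of $U$ into the argument of the unperturbed functional rather than into the functional itself.
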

\begin{proof}
Let us consider the minimisers $(\ke,\ce)$: they satisfy the same uniform bounds in the space of measures that we saw in Lemma \ref{L1}, and hence we can assume, up to subsequences, that they satisfy  $(\ke,\ce)\deb (\tilde k,\tilde c)$, but we do not know yet that the limit is the maximiser for the limit problem. However, we can pass to the limit the inequality $\ked + \ce \leq \fe(\ke,K)$ and get the admissibility of $(\tilde k,\tilde c)$. We can also apply the semicontinuity result of Lemma \ref{L2} to $\ce+h\deb \tilde c$, thus getting 
$$J_\infty(\tilde c)\geq \limsup_{h\to 0} J^{(h)}_\infty(\ce).$$
This proves that $\max J_\infty \geq J_\infty(\tilde c)\geq \limsup_{h\to 0} \max J^{(h)}_\infty$, and $(\tilde k,\tilde c)$ is optimal if we can prove 
$$ \max J_\infty\leq \liminf_{h\to 0} \max J^{(h)}_\infty.$$
Let $(k,c)$ be the (unique) maximiser for $J_\infty$: we can choose to use it also in the problem with $J^{(h)}_\infty$, thanks to $f_h\geq f$. Thus we get
$$ \liminf_{h\to 0} \max J^{(h)}_\infty\geq  \liminf_{h\to 0} J^{(h)}_\infty(c)= \liminf_{h\to 0}\int U(c+h)e^{-rt}dt\geq \int U(c)e^{-rt}dt=J_\infty(c)= \max J_\infty$$
(in the second inequality we used Fatou's lemma and positivity of $U$). This proves that $\tilde c$ is optimal, hence $\tilde c=c$ by uniqueness and the whole sequence converges.
\end{proof}

Some corollaries of the previous result are the following
\begin{corollary}\label{kLip}
The optimal $k$ is locally Lipschitz on each interval $[0,T]$, and its Lipschitz constant only depends on $U,f,k_0$ and $T$.
\end{corollary}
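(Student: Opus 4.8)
The plan is to reduce the Lipschitz claim to a two-sided $L^\infty$ bound on $k'$ over $[0,T]$. By Corollary \ref{uniqamongall} we already know that $k\in W^{1,1}_{loc}(\Rp)$ and that the constraint is saturated, so that a.e.\ $k'(t)=f(k(t),K(t))-c(t)$. Since any element of $W^{1,1}([0,T])$ whose weak derivative is essentially bounded by $L$ is $L$-Lipschitz on $[0,T]$, it suffices to bound $|f(k(t),K(t))-c(t)|$ on $[0,T]$ by a constant depending only on the asserted data.

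For the output term, recall from Lemma \ref{L1} that $k\in\Linl$, with an explicit upper bound $\bar k=\max\{k_0,\bar k(M)\}$, where $M$ bounds $K$; thus the trajectory $(k(t),K(t))$ stays in the compact set $[0,\bar k]\times\overline{K(\Rp)}$. Since $f$ is continuous on $\Rp\times\Rp$, it is bounded on this compact set, so there are constants $m_f\le f(k(t),K(t))\le C_1$ for every $t$, where $C_1=M(K)$ is the same upper bound already used in Lemma \ref{L1}. For the consumption term, Proposition \ref{lowerbound on c} provides $0<c_-(T)\le c(t)\le c_+(T)$ on $[0,T]$, with $c_\pm(T)$ depending only on $f,U,k_0,T$.

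Combining these, on $[0,T]$ the saturation identity gives $k'(t)\le f(k(t),K(t))\le C_1$ (using $c\ge 0$) and $k'(t)=f(k(t),K(t))-c(t)\ge m_f-c_+(T)$. Hence $|k'(t)|\le L(T):=\max\{C_1,\,c_+(T)-m_f\}$ a.e.\ on $[0,T]$, so $k$ is $L(T)$-Lipschitz there. Every ingredient of $L(T)$ — the $L^\infty$ bound on $k$, the bounds $C_1,m_f$ on $f$ along the trajectory, and $c_+(T)$ — depends only on $f,U,k_0$ and $T$, which is precisely the claimed dependence.

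I do not expect a genuine obstacle, since this is a direct consequence of the saturation identity combined with the already-established bounds. The only points requiring care are the verification that the $L^\infty$ bound on $k$ and the constants $C_1,m_f$ can indeed be expressed through $f,U,k_0,T$ — which relies on the standing boundedness of $K$ (its range staying in the fixed interval $\bar I$, so that the dependence on $K$ is absorbed into the data of $f$) — together with the elementary fact that a $W^{1,1}$ function with essentially bounded derivative is Lipschitz with the same constant.
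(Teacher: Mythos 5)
Your proof is correct and follows essentially the same route as the paper: saturation of the constraint $k'=f(k,K)-c$ (Corollary \ref{uniqamongall}), boundedness of $f$ along the trajectory coming from the $L^\infty$ bounds on $k$ and $K$, and the local upper bound on $c$ from Proposition \ref{lowerbound on c}, yielding a local $L^\infty$ bound on $k'$. Your write-up is merely more explicit than the paper's (spelling out the two-sided bound on $k'$ and the $W^{1,1}$-plus-bounded-derivative-implies-Lipschitz step), but there is no difference in substance.
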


\begin{proof} The saturation of the constraint gives $k'=f(k,K)-c$. From the boundedness of $k$ and $K$ and the local upper bound on $c$ obtained from
Proposition \ref{lowerbound on c}, we obtain a local $L^\infty$ bound on $k'$, which proves the claim.
\end{proof}

\begin{corollary}\label{k non zero}
The optimal $k$ satisfies $k(t)>0$ for every $t\in\Rp$. Moreover, the optimal $(k,c)$ satisfies the differential equations
$$\begin{cases} k'=f(k,K)-c\\
			U''(c)c'=(r-f_k(k,K))U'(c)\end{cases}$$	
\end{corollary}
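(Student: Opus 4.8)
The plan is to prove the two assertions in order, the first equation of the system being nothing but the saturation $k'=f(k,K)-c$ already obtained in Corollary \ref{uniqamongall}.

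\emph{Positivity of $k$.} I would argue by contradiction. Set $T_1:=\inf\{t\,:\,k(t)=0\}$ and suppose $T_1<\infty$. By Corollary \ref{kLip} the optimal $k$ is locally Lipschitz, hence continuous, so that $k(T_1)=0$ and $k>0$ on $[0,T_1)$. I would now combine three ingredients. First, the saturation $k'=f(k,K)-c$. Second, Assumption \ref{f-ASSUMP}(i), which gives $f(0,K)=0$ for every $K\in\bar I$; since $f$ is continuous and $K$ takes values in a bounded set, this yields $f(k(s),K(s))\to 0$ as $s\to T_1^+$ (because $k(s)\to 0$). Third, Proposition \ref{lowerbound on c} applied on $[0,T_1+1]$, which gives a constant $c_-(T_1+1)>0$ with $c(s)\ge c_-(T_1+1)$ for a.e.\ $s\in[0,T_1+1]$, in particular on a right neighbourhood of $T_1$. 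Consequently there is $\eta>0$ such that $f(k(s),K(s))-c(s)\le -\tfrac12 c_-(T_1+1)<0$ for a.e.\ $s\in[T_1,T_1+\eta]$, whence
$$k(t)=\int_{T_1}^{t}\big(f(k(s),K(s))-c(s)\big)\,ds<0\qquad\text{for }t\in(T_1,T_1+\eta],$$
contradicting the constraint $k\ge 0$. Thus $T_1=+\infty$ and $k(t)>0$ for every $t$.

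\emph{The Euler--Lagrange equation.} The key point is that positivity of $k$, together with Proposition \ref{lowerbound on c}, makes \emph{both} constraints strictly inactive on every bounded interval: on $[0,T]$ one has $k\ge k_{\min}(T)>0$ by continuity and $c\ge c_-(T)>0$. This legitimates interior variations. Eliminating $c$ through the saturation, I would rewrite the problem as the maximisation of $\J(k)=\int_0^\infty e^{-rt}U(f(k,K)-k')\,dt$ and perturb $k\mapsto k+\varepsilon\phi$ for $\phi\in C_c^\infty((0,\infty))$; for $|\varepsilon|$ small the perturbed path stays admissible (it keeps $k(0)=k_0$ since $\phi$ vanishes near $0$, stays positive, and keeps $c>0$). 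Differentiating $\J(k+\varepsilon\phi)$ at $\varepsilon=0$ and integrating by parts (the boundary terms vanish by compact support) gives the weak identity
$$\frac{d}{dt}\big(e^{-rt}U'(c)\big)=-\,e^{-rt}U'(c)\,f_k(k,K)\qquad\text{a.e.}$$
Expanding the left-hand side and cancelling $e^{-rt}$ produces exactly $U''(c)c'=(r-f_k(k,K))U'(c)$. The same identity can alternatively be read off the Pontryagin conditions in the proof of Proposition \ref{lowerbound on c} in the limit $h\to0$: since $k>0$, the multiplier $\mu$, supported on $\{k=0\}$, vanishes, so \eqref{13-A} reduces to $\theta'=-\theta f_k$, while the interior maximisation \eqref{10-C} gives $\theta=e^{-rt}U'(c)$, and combining the two is the same computation.

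I expect the only delicate step to be the regularity bookkeeping that turns the distributional identity into the pointwise ODE. The weak equation above shows that $e^{-rt}U'(c)$ has a locally bounded (indeed continuous) derivative on each $[0,T]$, hence is locally Lipschitz; since on $[0,T]$ the values of $c$ lie in the compact subinterval $[c_-(T),c_+(T)]\subset(0,\infty)$, where $U'$ is a $C^1$ diffeomorphism onto its image and $f$ is $C^1$ (Assumption \ref{f-ASSUMP}(ii)), one recovers that $U'(c)$, and therefore $c$, is locally Lipschitz. This is precisely what is needed to write $c'$ and $U''(c)$ and to assert the equation a.e.; the formal manipulation itself is routine once this regularity is in place.
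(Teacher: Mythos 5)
Your proposal is correct and takes essentially the same route as the paper: the positivity of $k$ is proved by the identical contradiction argument (the uniform lower bound on $c$ from Proposition \ref{lowerbound on c}, combined with $f(0,K)=0$ and continuity, forces $k$ to become strictly negative just past a zero of $k$), and the ODE system comes from the saturation $k'=f(k,K)-c$ of Corollary \ref{uniqamongall} plus interior first-order optimality conditions. The only difference is one of detail: where the paper simply invokes ``standard Pontryagin principle or standard Euler--Lagrange equations'' once $k$ and $c$ are known to be bounded away from zero, you carry out the interior-variation computation explicitly (including the regularity bootstrap for $c$, which implicitly uses the same smoothness and strict concavity of $U$ that the paper's statement itself presupposes in writing $U''(c)c'$).
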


\begin{proof} Thie condition $k>0$ is a consequence of the lower bound on $c$. Suppose that there exists an instant $t_0$ such that $k(t_0)=0$. Take the lower bound $c_{-}(2t_0)$ for $c$ on the interval $[0,2t_0]$. Since $f(0,K)=0$ and $f$ and $k$ are continuous, on a small neighbourhood $(t_0-\epsilon,t_0+\epsilon)$ of $t_0$ we have $f(k(t),K(t))\leq c_{-}(2t_0)/2$. Then, on this interval we have $k'(t)\leq -c_{-}(2t_0)/2<0$. But this is impossible since $k\geq 0$ and $k(t_0)=0$.

Once we know that neither $c$ nor $k$ vanish, then standard Pontryagin principle (or the conditions provided by Theorem 1 in \cite{de2009optimality} in the case where there is no measure $\mu$); or standard Euler-Legrange equations in calculus of variations, considering that now $f$ and $U$ are smooth) provide the desired optimality conditions in the form of the desired system of ODEs.
\end{proof}

\section{Existence of rational expectations equilibria}
\label{RAT-EXP}

We now turn our attention to the rational expectations equilibria. To this end, we recall that we defined the operator $\T$  as follows: Consider any externalities configuration $K$, solve the parametric individual optimisation problem \eqref{3} for any $z \in D$ to obtain the family of  maximisers $\{k^{*}(\cdot, z ; K)\}_{z \in D}$ and then act the operator $\S$ on the maximiser to obtain $\S k^{*}$, i.e.,
\begin{eqnarray*}
K \mapsto k^{*} \mapsto \S k^{*} =: K^{\#}, \,\,\,\,\, \T\, K: =  K^{\#}.
\end{eqnarray*}
 
A fixed point for the operator $\T$ is identified as a rational expectations equilibrium.  

Our proof of existence is based upon the Schauder fixed point theorem and requires the continuity of the operator $\T$.  Note that by defining in the above scheme first the operator $\M$ by $\M K := k^{*}$, we may decompose the operator $\T$ as $\T=\S \,  \M$. Since $\S$ is a compact operator, we focus our attention on the properties of the operator $\M$. As the operator $\M$ is defined by mapping the parameter $K$ of the parametric optimisation problem \eqref{3} to the optimiser,  in order to obtain the continuity of $\M$ we need to ensure the continuity of maximisers with respect to the parameter. This is essentially a $\Gamma$-convergence type result (see e.g. \cite{attouch2014variational} or \cite{dal2012introduction}), but we will avoid using this theory, as we will directly consider optimisers. The argument is very similar to what developed in Lemma \ref{ch}.

Consider a sequence $\{K_n\} \subset L^{\infty}(\Rp)$ (uniformly bounded in $n \in \N$) such that $K_{n} \to K$, locally uniformly, and the corresponding sequence of  constraint sets
\begin{eqnarray*}
\AKn =\{ (k,c) \,\,\, : \,\,\, k'+c \le f(k,K_n)\},
\end{eqnarray*}
and  let  $(k_n^{*},c_n^{*})$ be the maximisers of  the sequence of optimisation problems  
\begin{eqnarray*}
\max_{(k,c) \in  \AKn}J_{\infty}(c).
\end{eqnarray*}
We need to show that we have $(k_n^{*},c_n^{*})\to (k^{*},c^{*})$, where $(k^{*},c^{*})$ is the maximiser of the optimisation problem 
\begin{eqnarray*}
\max_{(k,c) \in \AK} J_{\infty}(c).
\end{eqnarray*}
Note that the above maximiser is unique if the conditions of Corollary \ref{uniqamongall} are satisfied.

\begin{proposition}\label{convKnK}
Suppose that 
 $\{K_n\} \subset L^{\infty}(\Rp)$ is a uniformly bounded sequence such that $K_{n} \to K$ locally uniformly, and let  $(k_n^{*},c_n^{*})$ be the corresponding maximisers. Then we have $(k_n^{*},c_n^{*})\to (k^{*},c^{*})$, where $(k^{*},c^{*})$ is the maximiser corresponding to $K$. 
\end{proposition}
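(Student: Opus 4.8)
The plan is to follow the scheme of Lemma \ref{ch}, with the index $n$ here playing the role of the approximation parameter $h$ there; the essential new difficulty is that the convergence $K_n\to K$ carries no monotonicity analogous to $\fe\ge f$. First I would extract compactness. Since $\{K_n\}$ is uniformly bounded, the constant $C_1=M(K_n)$ bounding $f(k,K_n)$ from above can be chosen independent of $n$, so Lemma \ref{L1} gives bounds on $(k_n^*)'$ and $c_n^*$ in $\Ml$ that are uniform in $n$ on each compact interval. Hence, up to a subsequence, $(k_n^*,c_n^*)\deb(\tilde k,\tilde c)$ with $k_n^*\to\tilde k$ a.e. and $\tilde k(0)=k_0$. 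To see that $(\tilde k,\tilde c)\in\AK$, I would pass to the limit in $(k_n^*)'+c_n^*\le f(k_n^*,K_n)$: the left-hand side converges weakly-$*$ as a measure, while on the right, $k_n^*\to\tilde k$ a.e. together with $K_n\to K$ locally uniformly and the continuity and local boundedness of $f$ give $f(k_n^*,K_n)\to f(\tilde k,K)$ in $L^1_{loc}$, so the inequality survives in the sense of measures.

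Next I would establish the upper bound $\limsup_n J_\infty(c_n^*)\le \max_{\AK}J_\infty$. The tail estimate of Lemma \ref{L2}(i) holds with a threshold $T(\epsilon)$ uniform in $n$ (again because $C_1$ is uniform), and $J_T$ is upper semicontinuous for weak-$*$ convergence of measures; combining these exactly as in Lemma \ref{L2}(ii) yields $\limsup_n J_\infty(c_n^*)\le J_\infty(\tilde c)$. Since $(\tilde k,\tilde c)$ is admissible for $K$, we get $J_\infty(\tilde c)\le \max_{\AK}J_\infty$.

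The main obstacle is the reverse inequality $\liminf_n J_\infty(c_n^*)\ge \max_{\AK}J_\infty$, for which I must build admissible competitors for the $K_n$-problems out of the limit optimiser $(k^*,c^*)$; unlike in Lemma \ref{ch}, $(k^*,c^*)$ is in general not itself admissible for $K_n$. Here I would use the saturation $(k^*)'+c^*=f(k^*,K)$ (Corollary \ref{uniqamongall}) and the strictly positive lower bound $c^*\ge c_-(T)>0$ on $[0,T]$ (Proposition \ref{lowerbound on c}), together with $k^*>0$ (Corollary \ref{k non zero}). Fix $T$ and set $\delta_n(t):=f(k^*(t),K(t))-f(k^*(t),K_n(t))$; since $k^*$ is continuous and bounded away from $0$ on $[0,T]$ and $K_n\to K$ uniformly there, uniform continuity of $f$ on the relevant compact set gives $\sup_{[0,T]}|\delta_n|\to0$. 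I would then take the competitor $k_n=k^*$, $c_n=c^*-\delta_n$ on $[0,T]$, which for large $n$ is nonnegative (since $c^*\ge c_-(T)$ while $\sup_{[0,T]}|\delta_n|\to0$) and satisfies $k_n'+c_n=f(k^*,K)-\delta_n=f(k^*,K_n)$, hence is admissible; on $(T,\infty)$ I continue with zero consumption along a solution of $k_n'=f(k_n,K_n)$ with $k_n(T)=k^*(T)>0$, which exists and stays positive and bounded by Assumptions \ref{f-ASSUMP}(iv)--(v). Then, using $U\ge0$,
\[
J_\infty(c_n^*)\ge J_\infty(c_n)\ge \int_0^T e^{-rt}U(c^*-\delta_n)\,dt \longrightarrow J_T(c^*)\quad(n\to\infty),
\]
so $\liminf_n J_\infty(c_n^*)\ge J_T(c^*)$; letting $T\to\infty$ and using $J_T(c^*)\uparrow J_\infty(c^*)=\max_{\AK}J_\infty$ gives the claim.

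Finally I would chain the inequalities, $\max_{\AK}J_\infty\le\liminf_n J_\infty(c_n^*)\le\limsup_n J_\infty(c_n^*)\le J_\infty(\tilde c)\le\max_{\AK}J_\infty$, which forces $J_\infty(\tilde c)=\max_{\AK}J_\infty$. Thus $(\tilde k,\tilde c)$ is optimal for $K$, and by the uniqueness of Corollary \ref{uniqamongall}, $(\tilde k,\tilde c)=(k^*,c^*)$. Since every subsequence of $(k_n^*,c_n^*)$ admits a further subsequence converging to this same limit, the whole sequence converges, $(k_n^*,c_n^*)\deb(k^*,c^*)$.
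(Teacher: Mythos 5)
Your proposal is correct and follows essentially the same strategy as the paper's proof: compactness from Lemma \ref{L1}, admissibility of the weak limit, upper semicontinuity via Lemma \ref{L2} for one inequality, and for the reverse inequality the construction of admissible competitors for the $K_n$-problems by perturbing $(k^*,c^*)$ on $[0,T]$, with nonnegativity of the perturbed consumption secured by Proposition \ref{lowerbound on c}, followed by uniqueness (Corollary \ref{uniqamongall}) to identify the limit and upgrade to full-sequence convergence. The only differences are cosmetic: you subtract the pointwise defect $\delta_n(t)$ where the paper subtracts its sup norm $\epsilon_n$, and you send $T\to\infty$ by monotone convergence where the paper invokes the $\epsilon$-tail bound of Lemma \ref{L2}(i).
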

\begin{proof}
We follow a similar scheme as in Lemma \ref{ch}.

The functions  $(k_n^{*},c_n^{*})$ satisfy the same uniform bounds in the space of measures that we saw in Lemma \ref{L1}, and hence we can assume, up to subsequences, that they satisfy  $(k_n^*,c_n^*)\deb (\tilde k,\tilde c)$, but we do not know yet that the limit is the maximiser for the limit problem. Yet, we can pass to the limit the inequality $(k_n^*)'+c_n^*\le f(k_n^*,K_n)$ and get the admissibility of $(\tilde k,\tilde c)$. We can also apply the semicontinuity result of Lemma \ref{L2} to, thus getting 
$$J_\infty(\tilde c)\geq \limsup_{n} J_\infty(c_n^*).$$
This proves that 
$$\max \{ J_\infty(c)\,:\, (k,c)\in\mathcal A(K)\} \geq J_\infty(\tilde c)\geq \limsup_{n} J_\infty(c_n^*)=\limsup_{n} \max \{ J_\infty(c)\,:\, (k,c)\in\mathcal A(K_n)\}. $$
Then, $(\tilde k,\tilde c)$ is optimal if we can prove $\max \{ J_\infty(c)\,:\, (k,c)\in\mathcal A(K)\} \leq \limsup_{n} \max \{ J_\infty(c)\,:\, (k,c)\in\mathcal A(K_n)\}.$

In order to do so, let us take the optimal pair $(k^*,c^*)$, fix $\epsilon>0$ and find $T$ such that  $J_\infty(c^*)\leq J_T(c^*)+\epsilon$.
Then, let us define a sequence $(k_n,c_n)\in \mathcal A(K_n)$ as follows. Set $\epsilon_n:=||f(k^*,K)-f(k^*,K_n)||_{L^\infty([0,T])} \to 0$ and use
$$k_n(t)=k^*(t),\;c_n(t):=c^*(t)-\epsilon_n\quad \mbox{for }t\in [0,T];$$
for $t>T$, just use any admissible pair $(k,c)$ satisfying $k(T)=k^*(T)$ and all the constraints.
It is important to use Proposition \ref{lowerbound on c} to guarantee that, at least for large $n$, the consumption $c_n$ defined above is admissible (i.e. nonnegative). 

Then, by monotone convergence we have
$$J_T(c^*)=\lim_n J_T(c_n)\leq \limsup_n J_\infty(c_n)\leq  \limsup_{n} \max \{ J_\infty(c)\,:\, (k,c)\in\mathcal A(K_n)\},$$
(where we use positivity of $U$, to pass from $J_T$ to $J_\infty$).

We deduce 
$$J_\infty(c^*)\leq  \limsup_{n} \max \{ J_\infty(c)\,:\, (k,c)\in\mathcal A(K_n)\}+\epsilon$$
and the result is proven since $\epsilon$ is arbitrary. 

The optimality of $(\tilde k,\tilde c)$ together with the uniqueness of the maximer implies $(\tilde k,\tilde c)=(k^*,c^*)$ and the convergence of the full sequence without the need to extract a subsequence is standard.
\end{proof}

We now define a space $Y$ of continuous and locally bounded functions of the variables $(t,z)\in\R_+\times D$ in the following way: set
$$||y||_Y:=\sum_{m\geq 0} 2^{-m}||y||_{L^\infty([0,m]\times D)},$$
and then  use 
$$Y=\{y\in L^\infty_{loc}(\R_+\times D)\,:\, ||y||_Y<+\infty\},$$
endowed with the norm $||\cdot||_Y$. This is a Banach space, and we want to use Schauder's fixed point theorem in it. Whenever a sequence $y_n$ is such that $\| y_n\|_{\infty}$ is bounded, then $y_n\to y$ in $Y$ if and only if $y_n \to y$ uniformly on  sets of the form  $[0,m] \times D$, i.e. if and only if $y_n \to y$ uniformly in $D$ and locally uniformly in time $t\in\Rp$.

The space $Y$ is the space where externalities $K$ reside. Externalities are typically supposed to be uniformly bounded (and so they actually belong to $Y$, as $L^\infty(\R_+\times D)\subset Y$).
% of uniform convergence on the sets of the form $[0,T]\times D$.

The operator $\T$ will be defined on a subset of $Y$, precisely on $L^\infty(\R_+\times D)\subset Y$. As we said, it is  factorised as $\T=\S \M$, where
 $$(\M K)(\cdot,z):=\mbox{ the unique optimal curve $k^*$ associated with the externality $K(\cdot,z)$}$$
 and $\S : (\BVl \cap \Lil) \to Y$ is the integral operator defined in \eqref{defi SS}, which is essentially not affecting time but only space.   Because of Proposition \ref{convKnK}, the operator $\M$ is continuous, and hence is $\T$. 
 We want to use Schauder's fixed point theorem on the operator $\T$, thus obtaining existence of a rational expectations equilibrium.
 \begin{theorem}
 Under the standing assumptions of this paper, and the assumption $\inf_z k_0(z)>0$, the operator $\T$ admits a fixed point in $Y$, and hence there exists a rational expectation equilibrium for the spatial economy. 
 \end{theorem}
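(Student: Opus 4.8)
The plan is to apply Schauder's fixed point theorem to the operator $\T=\S\M$ acting on a suitable closed, convex, bounded subset of $Y$, so the three things I need to verify are: that $\T$ maps such a set into itself, that the image is contained in a compact set, and that $\T$ is continuous. Continuity is essentially already in hand: Proposition \ref{convKnK} gives continuity of $\M$ (as $(k_n^*,c_n^*)\to(k^*,c^*)$ whenever $K_n\to K$ locally uniformly), and $\S$ is Lipschitz for the uniform topology by Assumption \ref{ass on S}, so the composition is continuous from $L^\infty(\R_+\times D)$ into $Y$. The role of the hypothesis $\inf_z k_0(z)>0$ is to make Proposition \ref{lowerbound on c} and Corollary \ref{kLip} applicable uniformly in $z$, which is what will ultimately deliver the compactness.

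First I would fix the invariant set. By Assumption \ref{ass on S}, $\S$ takes values in $C(D;\bar I)$, so $\T K=\S(\M K)$ automatically lands in the set of functions taking values in the fixed interval $\bar I$. Since $\bar I$ is bounded away from zero (the relevant case for Assumption \ref{f-ASSUMP}(v)) and, in the cases of interest, bounded above, I would take
$$\mathcal C:=\{K\in L^\infty(\R_+\times D)\,:\, K(t,z)\in\bar I \text{ for all } (t,z)\},$$
which is convex, and closed and bounded in $Y$; one checks $\T(\mathcal C)\subset\mathcal C$ directly from the fact that $\S$ maps into $C(D;\bar I)$. The externality functions are constant-in-nothing but simply range in $\bar I$, so this is the natural arena.

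The main obstacle — and the step I would spend the most care on — is compactness of $\T(\mathcal C)$ in $Y$. Convergence in $Y$ means uniform convergence on each slab $[0,m]\times D$, so I would produce compactness via an Arzel\`a--Ascoli argument in both variables simultaneously. In the time variable, Corollary \ref{kLip} shows that for each $z$ the optimal curve $k^*(\cdot,z)=(\M K)(\cdot,z)$ is Lipschitz on $[0,m]$ with a constant depending only on $f,U$, on $T=m$, and on $k_0(z)$; using $\inf_z k_0(z)>0$ together with the bound $\bar C_0+\bar C_1 t$ from Lemma \ref{L1}, this Lipschitz constant is uniform in $z$ and in $K\in\mathcal C$. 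Thus $\{\M K:K\in\mathcal C\}$ is uniformly bounded and equi-Lipschitz in time on each slab. The decisive point is then that $\S$ regularizes in space: by Assumption \ref{ass on S}, $\S$ maps the bounded set $\{\M K(t,\cdot)\}$ of $L^\infty(D)$ into a compact subset of $C(D;\bar I)$, so the family $\T K=\S(\M K)$ is equicontinuous in $z$, uniformly in $t$ on each slab. Combining equi-Lipschitz-in-$t$ with equicontinuity-in-$z$ and uniform boundedness, Arzel\`a--Ascoli on the compact product $[0,m]\times D$ gives relative compactness of $\T(\mathcal C)$ on each slab; a diagonal argument over $m$ then yields relative compactness in $Y$ itself.

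With continuity, invariance $\T(\mathcal C)\subset\mathcal C$, and precompactness of $\T(\mathcal C)$ established, Schauder's theorem applies and furnishes a fixed point $K=\T K=\S(\M K)$, which by definition is a spatial rational expectations equilibrium. The one technical subtlety worth flagging is that the continuity statement in Proposition \ref{convKnK} is phrased for locally uniform convergence $K_n\to K$, which is exactly convergence in the $Y$ norm on bounded sets; so I would note explicitly that the topology in which continuity was proved matches the topology of $Y$, ensuring the hypotheses of Schauder's theorem are met with no mismatch between the convergence used for continuity and the convergence giving compactness.
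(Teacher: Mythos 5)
Your overall strategy coincides with the paper's: Schauder's theorem applied to $\T=\S\,\M$ on a closed convex subset of $Y$, with continuity supplied by Proposition \ref{convKnK} plus the Lipschitz property of $\S$, and compactness supplied by the time-Lipschitz bounds of Corollary \ref{kLip} together with the compactifying property of $\S$ in space. The genuine gap lies in your choice of invariant set $\mathcal C=\{K\in L^\infty(\R_+\times D):K(t,z)\in\bar I\}$. The standing assumptions of the paper explicitly allow the interval $\bar I$ to be \emph{unbounded} (see the remark following Assumption \ref{ass on S}); you set this case aside with ``in the cases of interest, bounded above'', but the theorem is asserted under the standing assumptions, and Assumption \ref{f-ASSUMP}(iv) --- the condition $\bar k(M)=o(M)$ as $M\to\infty$ --- is present in the paper precisely to handle it. When $\bar I$ is unbounded, your $\mathcal C$ is an unbounded subset of $Y$: it contains externalities $K$ with $\|K\|_{L^\infty}$ arbitrarily large; the corresponding optimisers $\M K$ are only bounded by $\max\{\sup_z k_0(z),\bar k(\|K\|_{L^\infty})\}$, which is not uniform over $\mathcal C$; the Lipschitz constant of Corollary \ref{kLip} and the bound $C_1=M(K)$ of Lemma \ref{L1} both depend on $\|K\|_{L^\infty}$, so equi-Lipschitz-in-time fails; and Assumption \ref{ass on S} only sends \emph{bounded} subsets of $L^\infty(D)$ to compact subsets of $C(D;\bar I)$, so your equicontinuity-in-space argument has no bounded input to feed it. Consequently $\T(\mathcal C)$ need not be precompact (indeed it need not even be bounded), and Schauder cannot be applied on $\mathcal C$.

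The missing idea is exactly the paper's device for cutting down to a uniformly bounded set while keeping invariance: take $Y_0$ to be the continuous functions bounded by a single constant $M$, where $M$ is chosen so large that $\mathrm{Lip}(\S)\max\{\sup_z k_0(z),\bar k(M)\}+\|\S(0)\|_{L^\infty}<M$, a choice possible precisely because $\bar k(M)=o(M)$. Then any $K$ bounded by $M$ produces an optimiser with values in $[0,\max\{\sup_z k_0(z),\bar k(M)\}]$, and the Lipschitz estimate $\|\S(g)\|_{L^\infty}\le \mathrm{Lip}(\S)\|g\|_{L^\infty}+\|\S(0)\|_{L^\infty}$ yields $\T(Y_0)\subset Y_0$; on this uniformly bounded set all of your compactness ingredients do apply uniformly, and the remainder of your argument (Arzel\`a--Ascoli on each slab $[0,m]\times D$, diagonal extraction, matching of topologies) goes through verbatim. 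In the special case where $\bar I$ is bounded your proof is correct and essentially reproduces the paper's, with invariance obtained for free; as written, however, it does not establish the theorem in the generality claimed.
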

 \begin{proof}
 Consider $Y_0\subset Y$ the set of continuous functions on $R_+\times D$ bounded by a constant $M$ such that 
 $$\mathrm{Lip}(\S)||\max\{\sup_z |k_0(z)|,\bar k(M)\}+||\S(0)||_{L^\infty}<M$$ 
 (here $\S(0)$ is the function obtained by applying $\S$ to the zero function, and we use the Lipschitz behaviour of $\S$ so that for every bounded function $g$ we have $||\S (g)||_{L^\infty}\le \mathrm{Lip}(\S) ||g||_{L^\infty})+||\S(0)||_{L^\infty}$). With this choice (which is always possible choosing $M$ large enough, since we supposed $\bar k(M)=o(M)$), the operator $\T$ maps $Y_0$ into itself, since for every continuous function $K(\cdot)$ bounded by $M$ the corresponding optimiser $k^*(\cdot)$ takes values in $[0,\max\{k_0,\bar k(M)\}]$.
 
 As $Y_0$ is a convex subset of $Y$ on which $\T$ is defined and continous, in order to apply Schauder's fixed point theorem we just need to check that $\T$ maps $Y_0$ into a compact subset of $Y_0$. For our choice of convergence, we just need to guarantee equicontinuity of the functions of the form $\T( K)$ on each set $[0,T]\times D$. The continuity in time of each maximiser $k_*$ is guaranteed by Corollary \ref{kLip}, which provides Lipschitz bounds in time of $k^*$, combined with the Lipschitz behaviour of $\mathcal S$. The continuity in space is guaranteed by the assumptions on $\S$.
 
 As a consequence, $\T$ admits a fixed point.
 \end{proof}
 
 \begin{theorem} The rational expectation equilibrium $(k,c,K)$ is characterised by the following non-local Euler-Lagrange equation
 $$\begin{cases} k'=f(k,\S(k))-c,\\
			U''(c)c'=(r-f_k(k,\S(k)))U'(c),\\
			K=\S(k).\end{cases}$$	
\end{theorem}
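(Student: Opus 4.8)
The plan is to show that a fixed point $K=\T K$, together with the associated optimal pair $(k,c)=\M K$, satisfies the displayed system, and conversely that this system is exactly the collection of optimality conditions at a fixed point. The key observation is that all the real work has already been done: at a fixed point we have by definition $K=\S(k^*)=\S(\M K)$, so the third equation $K=\S(k)$ holds tautologically once we set $k=k^*=\M K$. It therefore remains only to identify the first two equations as the Euler--Lagrange/Pontryagin conditions for the individual parametric problem \eqref{3} with this particular externality $K$.

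First I would invoke the standing assumption $\inf_z k_0(z)>0$ together with the existence theorem just proved, so that a fixed point $K$ exists and, being of the form $\S(k^*)$, is continuous and bounded (it takes values in $\bar I$ by Assumption \ref{ass on S}). Fixing a location $z\in D$, the externality $K(\cdot,z)$ is a continuous bounded function of time, so the hypotheses of Corollary \ref{k non zero} are met for the individual problem at $z$. That corollary directly yields both
$$\begin{cases} k'=f(k,K)-c,\\ U''(c)c'=(r-f_k(k,K))U'(c),\end{cases}$$
where $k=k^*(\cdot,z)$ and $c=c^*(\cdot,z)$. Substituting the fixed-point relation $K=\S(k)$ into these two equations produces precisely the first two lines of the claimed nonlocal system, and the third line is the fixed-point identity itself.

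The main technical point to handle carefully is the passage from the ODE system (which Corollary \ref{k non zero} establishes pointwise in $z$, for each fixed location) to a statement about the pair $(k,c,K)$ as functions on $\R_+\times D$. For this I would note that Proposition \ref{lowerbound on c} and Corollary \ref{kLip} give bounds on $c$ and on the Lipschitz constant of $k$ that depend only on $f,U,k_0$ and $T$, hence uniformly in $z$ once we use $\inf_z k_0(z)>0$ and the uniform bound $M$ on $K$ from the set $Y_0$; this guarantees that the pointwise-in-$z$ relations assemble into genuine (a.e.\ in $t$, for every $z$) identities for the joint functions. The strict concavity of $f$ (Assumption \ref{f-ASSUMP}(vi)) and Corollary \ref{uniqamongall} ensure that $k^*$, and hence $c^*=f(k^*,K)-(k^*)'$, is uniquely determined by $K$, so there is no ambiguity in writing ``$(k,c,K)$'' for the equilibrium. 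The harder part is purely one of bookkeeping rather than of new analysis: confirming that the second-order equation $U''(c)c'=(r-f_k(k,K))U'(c)$ is the correct differentiated form of the Pontryagin costate relation, using that at a fixed point $k>0$ and $c\ge c_-(T)>0$ so that the multiplier $\mu$ in \eqref{13-A} vanishes and $U$, $f$ are differentiable along the optimal path. Once this is in place, substituting $K=\S(k)$ everywhere completes the characterization.
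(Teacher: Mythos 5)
Your forward direction is essentially the paper's own argument: at a fixed point the externality $K=\S(k^*)$ is continuous, bounded and valued in $\bar I$, so Corollary \ref{k non zero} applies at each location (using $\inf_z k_0(z)>0$), giving $k'=f(k,K)-c$ and $U''(c)c'=(r-f_k(k,K))U'(c)$, and substituting the fixed-point identity $K=\S(k)$ yields the nonlocal system. Your housekeeping about uniformity in $z$ and uniqueness of the optimal pair is reasonable but not where the substance lies.

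The gap is the converse implication, which the word \emph{characterised} requires and which the paper proves explicitly: if a triple $(k,c,K)$ \emph{satisfies} the system, it must actually be a rational expectations equilibrium. Your plan gestures at this (``conversely that this system is exactly the collection of optimality conditions at a fixed point''), but the proof never executes it, and it does not follow from what you wrote. Corollary \ref{k non zero} gives only \emph{necessity} of the ODEs for the optimizer; to go backwards you need \emph{sufficiency}: given $(k,c,K)$ solving the system, one must show that $(k,c)$ is optimal for the parametric problem \eqref{3} with externality $K=\S(k)$. The paper's proof does this by noting that, by concavity of the maximization problem, the equations of Corollary \ref{k non zero} characterise the (unique, by Corollary \ref{uniqamongall}) optimizer $k^*$ of the individual problem with this $K$; hence $k=k^*=\M(K)$, and therefore $K=\S(k)=\S(\M(K))=\T K$ is a fixed point. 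Without this step you have only shown that every equilibrium solves the system, leaving open the possibility that the system admits spurious solutions that are not equilibria --- so the ``characterisation'' claim is unproved.
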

\begin{proof}
It is easy to see, using Corollary \ref{k non zero}, that if  $(k,c,K)$ is a rational expectation equilibrium, then it satisfies this system. On the other hand, if $(k,c,K)$ satisfies the system, then we can solve the individual optimisation problems for given $K=\S(k)$ and find a solution $k^*$ which is characterised (because of the concavity of the maximization problem) by the equations in Corollary \ref{k non zero}. This means that $k^*=k$, and hence $k=\M (K)$ and $K=\S(\M(K))$ is a fixed point.
\end{proof}

 \begin{remark} The arguments can be generalised in the case where the integral operator that models the effect of externalities, $\S$
acts on the temporal variable as well on the spatial variable, for instance
$(\S k)(t,z)=\int_{D} \int_{0}^{\infty} w(z,y,t,s) k(y,s) dy ds$,
where now $\S$ can have some sort of regularization effect on the temporal variable, or include memory effects or delays. Thisis particularly meaningful in order to model the effect of knowledge spillover in the generation of externalities, as we mentioned in the introduction. From the technical mathematical point of view, this requires minimal adaptations.\end{remark}

\begin{remark} It is easy to see that, in case of lack of uniqueness for the maximisers of $J_\infty$ in $\mathcal{A}(K)$, then the proof of Proposition \ref{convKnK} easily provides that, up to extracting a subsequence, we have convergence of $(k_n^{*},c_n^{*})$ to one maximiser. This makes the operator $\M$ a multivalued map with closed graph. Moreover, if $f(\cdot,K)$ is concave, the set of maximisers is a convex set, which allows to obtain the existence of a fixed point by an easy application fo the Kakutani Theorem.
\end{remark}

%\section{Conclusion}
%
%In this paper we have addressed the problem of existence of spatial rational expectations equilibria (SREE)  in a general model of economic growth in the presence of spatial externalities. Using techniques from the calculus of variations in measure spaces,  we have shown that under general conditions  SREE exist and can be characterised in terms of a nonlocal Euler-Lagrange equation, that may be used for the numerical calculation of the SREE as well as
%for the study of qualitative aspects of their behaviour (e.g. pattern formation behaviour).  As a by product of this work, we have produced some rather detailed results on a parametric version of the local individual optimisation problem which are interesting in their own right. Our work provides some mathematically rigorous answers to the problem of the interaction of spatial and temporal effects in economic growth, and contributes to the increasingly attracting attention discussion of optimal allocation of economic resources in space and time.

%\bibliographystyle{chicago}
\bibliographystyle{plain}
\bibliography{fs-any-references}
\end{document}